\titleformat{\subsection}[runin]
{\bfseries} {\thesubsection{.}}{0.15cm}{}[.]
\titleformat{\subsubsection}[runin]
{\em}{\thesubsubsection{.}}{0.15cm}{}[.]
\newtheorem{theorem}{Theorem}[section]
\newtheorem{claim}[theorem]{Claim}
\newtheorem{lemma}[theorem]{Lemma}
\newtheorem{corollary}[theorem]{Corollary}
\theoremstyle{definition}
\numberwithin{equation}{section}
\numberwithin{figure}{section}
\newcommand\Ocal{\mathcal{O}}
\newcommand\Ascr{\mathscr{A}}
\newcommand\Cscr{\mathscr{C}}
\newcommand\Lscr{\mathscr{L}}
\newcommand\C{\mathbb{C}}
\renewcommand\b{\mathbb{B}}
\renewcommand\c{\mathbb{C}}
\renewcommand\d{\mathbb D}
\newcommand\n{\mathbb{N}}
\renewcommand\r{\mathbb{R}}
\newcommand\z{\mathbb{Z}}
\newcommand\hra{\hookrightarrow}
\newcommand\wt{\widetilde}
\newcommand\wh{\widehat}
\newcommand\dist{\mathrm{dist}}
\def\dist{\mathrm{dist}}
\begin{document}


\fancyhead[LO]{Complex curves in pseudoconvex Runge domains}
\fancyhead[RE]{A.\ Alarc\'on}
\fancyhead[RO,LE]{\thepage}

\thispagestyle{empty}


\vspace*{1cm}
\begin{center}
{\bf\LARGE Complex curves in pseudoconvex Runge domains containing discrete subsets}

\vspace*{0.5cm}

%
%
{\large\bf Antonio Alarc\'on}
\end{center}


%
%
\vspace*{5mm}

\begin{quote}
{\small
\noindent {\bf Abstract}\hspace*{0.1cm}
For any pseudoconvex Runge domain $\Omega\subset\c^2$ we prove that every closed discrete subset in $\Omega$ is contained in a properly embedded complex curve in $\Omega$ with any prescribed topology (possibly infinite). 

\vspace*{1mm}

\noindent{\bf Keywords}\hspace*{0.1cm} 
complex curve, 
holomorphic embedding, 
pseudoconvex domain, 
Runge domain.

\vspace*{1mm}

\noindent{\bf Mathematics Subject Classification (2010)}\hspace*{0.1cm} 
32H02, 
32E30, 
32T05. 
}
\end{quote}



\section{Introduction} 
\label{sec:intro}

A problem that has been the focus of interest is to determine whether a domain $\Omega$ in a complex Euclidean space $\c^N$ $(N\ge 2)$ admits closed complex curves containing a given closed discrete subset of $\Omega$ (see, among others, \cite{Globevnik1988MA,Globevnik1989IM,ForstnericGlobevnik1992CMH,RosayRudin1993MN,ForstnericGlobevnikRosay1996AM}). In this paper we are interested in the particular case of embedded curves in domains of $\c^2$. To the best of the author's knowledge, the most general result so far in this direction dates back to 1996 and says that for any {\em pseudoconvex Runge domain} $\Omega\subset\c^2$ and any closed discrete subset $\Lambda\subset\Omega$ there is a proper holomorphic embedding from the open unit disc $\d\subset\c$ to $\Omega$ whose image contains $\Lambda$ (see Forstneri\v c, Globevnik, and Stens\o nes \cite{ForstnericGlobevnikStensones1996MA}). Such disc was found as a leaf in a holomorphic foliation on $\Omega$ by holomorphic discs. It is moreover very likely that a slight refinement of the construction in \cite{ForstnericGlobevnikStensones1996MA} provides, for any such $\Omega$ and $\Lambda$, properly embedded complex curves in $\Omega$, containing $\Lambda$, with any {\em finite} topology; a trickier question is whether there are such curves with arbitrary (possibly infinite) topology. This paper gives an affirmative answer;
here is a simplified version of our main result (Theorem \ref{th:main}).
\begin{theorem}\label{th:main-intro}
Let $\Omega\subset\c^2$ be a pseudoconvex Runge domain, $M$ be an open Riemann surface, 
and $E\subset M$ be a closed discrete subset. For any proper injective map $f\colon E\to\Omega$ there is a Runge domain $D\subset M$ such that $E\subset D$, $D$ is a deformation retract of $M$, and the map $f$ extends to $D$ as a proper holomorphic embedding into $\Omega$.
\end{theorem}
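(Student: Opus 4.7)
The plan is to proceed by induction along coupled normal exhaustions of $M$ and of $\Omega$, combining the Runge property of $\Omega$ with the pushing technique of Forstneri\v c, Globevnik and Stens\o nes \cite{ForstnericGlobevnikStensones1996MA}. First, I would fix exhaustions $M_1\Subset M_2\Subset\cdots$ of $M$ by smoothly bounded Runge domains of finite topology, and $\Omega_1\Subset\Omega_2\Subset\cdots$ of $\Omega$ by smoothly bounded strongly pseudoconvex Runge subdomains. Since $f$ is proper, after passing to a subsequence one may assume $f(E\cap M_j)\subset\Omega_j$ for every $j$.

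The induction produces an increasing sequence of smoothly bounded Runge domains $D_1\Subset D_2\Subset\cdots$ in $M$ with $E\cap M_j\subset D_j\subset M_{j+1}$, where $D_j$ is a deformation retract of $M_{j+1}$, together with holomorphic embeddings $f_j\colon\overline{D_j}\to\overline{\Omega_j}$ satisfying $f_j|_{E\cap M_j}=f|_{E\cap M_j}$, the escape condition $f_j(\overline{D_j}\setminus D_{j-1})\cap\overline{\Omega_{j-1}}=\emptyset$, and a Cauchy condition $\sup_{\overline{D_{j-1}}}\|f_j-f_{j-1}\|<\varepsilon_j$ for a sufficiently small sequence $\varepsilon_j\to 0$ fixed in advance by interior estimates on $D_{j-1}$. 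The Cauchy condition ensures that the limit map $F:=\lim f_j$ is a well-defined holomorphic embedding of $D:=\bigcup_j D_j$ into $\c^2$, while the escape condition forces $F$ to be proper into $\Omega$. The base case is precisely the refinement of \cite{ForstnericGlobevnikStensones1996MA} noted in the Introduction, giving a compact bordered Riemann surface of any prescribed finite topology properly embedded in $\Omega_1$ and interpolating $f$ on $E\cap M_1$.

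For the inductive step, I would attach to $\overline{D_j}$ finitely many smoothly embedded arcs in $M$ so that the resulting compact set $S_j$ is Runge in $M$, contains $E\cap M_{j+1}$, and has the homotopy type needed for the eventual deformation retraction. One then extends $f_j$ along these arcs by smooth curves whose endpoints realize the prescribed values of $f$ at the newly captured points of $E$ and whose interiors lie in $\Omega_{j+1}\setminus\overline{\Omega_j}$; this routing inside $\Omega$ is possible precisely because $\Omega$ is a pseudoconvex Runge domain, through the pushing technique of \cite{ForstnericGlobevnikStensones1996MA}. A Mergelyan-type approximation on $S_j$ --- valid since $S_j$ is Runge in $M$ and the target is $\c^2$ --- combined with jet interpolation at the points of $E\cap M_{j+1}$ and a general-position perturbation to restore injectivity (available in dimension two), then yields $f_{j+1}$ on a smoothly bounded Runge neighborhood $D_{j+1}$ of $S_j$.

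The hard part is the simultaneous control required at every stage: keeping the image of the newly added arcs inside $\Omega$ but outside $\Omega_j$ so that the limit map is proper into $\Omega$ rather than merely into $\c^2$; hitting the prescribed values on $E$; preserving embeddedness and the Runge property after each approximation; and adding at each step exactly the right amount of topology so that $D=\bigcup_j D_j$ becomes a deformation retract of $M$. The first three controls are governed by the pushing lemma of \cite{ForstnericGlobevnikStensones1996MA} together with Runge--Mergelyan approximation, generic perturbation for injectivity, and jet interpolation in $\c^2$; the last is handled by choosing the arcs added at step $j$ so that $D_{j+1}$ retracts onto $D_j$ compatibly with $M_{j+2}$ retracting onto $D_{j+1}$. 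Passing to the limit then yields the desired Runge domain $D\subset M$ and proper holomorphic embedding $F\colon D\to\Omega$ extending the original $f$ on $E$.
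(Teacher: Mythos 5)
The central gap in your proposal is the claim that injectivity can be restored by ``a general-position perturbation... (available in dimension two)''. This is precisely backwards: for a holomorphic map from a one-dimensional source into $\c^2$, the set of self-intersections is, generically, a zero-dimensional transverse intersection, and is therefore \emph{stable} under small perturbations rather than removable. Generic perturbation removes self-intersections only in target dimension $N\ge 3$. This is exactly why constructing \emph{embedded} curves in $\c^2$ is the hard case, as the paper emphasizes, and your proposal offers no replacement mechanism. In the paper, embeddedness is preserved through Lemma~\ref{lem:ML}, an adaptation of the Wold and Forstneri\v c--Wold machinery (exposed points, rational shear maps, Fatou--Bieberbach domains), which simultaneously pushes the boundary of the current compact bordered surface out past the chosen polynomially convex set while keeping the map injective. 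Your proposal also misattributes this mechanism to the ``pushing technique of \cite{ForstnericGlobevnikStensones1996MA}''; that paper uses a construction by foliations on $\Omega$ with holomorphic discs as leaves, which does not obviously provide the escape condition for a surface of arbitrary topology, and the author explicitly departs from it.

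A second, related gap: you do not address how the interpolation condition at $E$ survives the boundary-pushing step. Pushing the boundary out of $\overline\Omega_j$ necessarily perturbs the embedding everywhere (including at the points of $E$ already caught), so after that step the map no longer agrees with $f$ on $E\cap M_j$. The paper repairs this by a third deformation: it composes with a holomorphic automorphism of $\c^2$ (Lemma~\ref{lem:hitting}, an adaptation of a lemma of Globevnik) that is close to the identity on a large ball and restores the exact prescribed values at the finitely many points of $\Lambda_{j+1}$. Jet interpolation alone, invoked during the Mergelyan step, cannot do this because it comes before the pushing, not after. Your outline of the high-level induction structure (coupled exhaustions, Cauchy and escape conditions, deformation retracts controlled by the added arcs) matches the paper's, but these two mechanisms --- embeddedness via Forstneri\v c--Wold technology rather than generic perturbation, and interpolation repair via a global automorphism --- are the substance of the proof and are missing from your argument.
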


If we choose $M=\d$, then the domain $D$ furnished by Theorem \ref{th:main-intro} is Runge in $\c$ and relatively compact, hence biholomorphic to the unit disc $\d$; we thereby recover the above mentioned result from \cite{ForstnericGlobevnikStensones1996MA}. However, in general, one cannot choose $D$ to be biholomorphic to $M$ in the theorem; for instance, whenever that $\Omega$ is bounded and $M$ is {\em Liouville} (also called {\em parabolic}, i.e., carrying no negative non-constant subharmonic functions). On the other hand, we shall prove that the domain $D\subset M$ can always be chosen of {\em hyperbolic} type (i.e., carrying negative non-constant subharmonic functions).

The subset $f(E)\subset\Omega$ given in Theorem \ref{th:main-intro} is closed and discrete; reciprocally, every closed discrete subset $\Lambda\subset \Omega$ is of the form $\Lambda=f(E)$ for some $E$ and $f$ as in the theorem. Moreover, the furnished domain $D$ is homeomorphic, hence also diffeomorphic, to the arbitrarily given open Riemann surface $M$. We therefore obtain the following corollary.
%
%
\begin{corollary}\label{co:intro}
Let $\Omega\subset\c^2$ be a pseudoconvex Runge domain and $\Lambda\subset \Omega$ be a closed discrete subset. On each open connected orientable smooth surface $M$ there is a complex structure $J$ such that the open Riemann surface $R=(M,J)$ admits a proper holomorphic embedding into $\Omega$ whose image contains $\Lambda$.
\end{corollary}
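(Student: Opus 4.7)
The plan is to deduce the corollary directly from Theorem \ref{th:main-intro} after equipping $M$ with an auxiliary complex structure and writing $\Lambda$ in the form required by that theorem. First, fix any complex structure $J_0$ on the open connected orientable smooth surface $M$; such a structure exists by the classical existence of isothermal coordinates for an arbitrary Riemannian metric on $M$. Denote the resulting open Riemann surface by $M_0 = (M, J_0)$, noting that $M_0$ and $M$ share the same underlying smooth manifold. Next, as a closed discrete subset of the second countable space $\Omega$, the set $\Lambda$ is at most countable; pick any closed discrete subset $E \subset M_0$ of the same cardinality as $\Lambda$, which is possible because $M_0$ is non-compact, and let $f \colon E \to \Omega$ be any bijection onto $\Lambda$. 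The map $f$ is injective by construction, and properness is automatic: for every compact $K \subset \Omega$ the intersection $K \cap \Lambda$ is finite, so $f^{-1}(K)$ is a finite subset of the discrete space $E$, hence compact.

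I would then apply Theorem \ref{th:main-intro} with this choice of $M_0$, $E$, and $f$. This produces a Runge domain $D \subset M_0$ containing $E$, which is a deformation retract of $M_0$, together with a proper holomorphic embedding $\phi \colon D \to \Omega$ extending $f$. In particular $\Lambda = f(E) = \phi(E) \subset \phi(D)$. As the author notes in the paragraph preceding the corollary, $D$ is diffeomorphic to $M_0$ and hence to $M$: being an open orientable surface that is a deformation retract of $M$, it has the same genus and the same space of ends as $M$, and the classification of non-compact orientable surfaces then identifies $D$ with $M$ up to diffeomorphism.

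Finally, I would transport the complex structure. Choose any diffeomorphism $\psi \colon M \to D$ and set $J := \psi^{*} J_D$, where $J_D$ is the complex structure that $D$ inherits from $M_0$. Then $\psi \colon (M, J) \to D$ is a biholomorphism, and $\phi \circ \psi \colon (M, J) \to \Omega$ is a proper holomorphic embedding whose image $\phi(D)$ contains $\Lambda$, which is exactly the content of the corollary. The only non-routine ingredient in this plan is the identification of $D$ with $M$ as smooth manifolds; it rests on the classification of non-compact orientable surfaces together with the fact that the deformation retract produced by Theorem \ref{th:main-intro} preserves both the genus and the space of ends, neither of which requires additional work once that theorem is available.
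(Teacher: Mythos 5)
Your proof is correct and takes essentially the same route as the paper: fix an auxiliary complex structure $J_0$ on $M$, realize $\Lambda$ as $f(E)$ for a proper injective map $f$ from a closed discrete set $E\subset(M,J_0)$, apply Theorem~\ref{th:main-intro}, note that the resulting Runge domain $D$ is diffeomorphic to $M$, and pull back the complex structure along a diffeomorphism $M\to D$. The paper states this argument in a brief remark immediately preceding the corollary, and your write-up merely expands the same steps.
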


Theorem \ref{th:main-intro} is already known in the particular cases when the domain $\Omega$ is $\c^2$ (see Ritter \cite{Ritter2014Crelle}) and when it is the open unit ball in $\c^2$ (see Globevnik and the author \cite{AlarconGlobevnik2017C2}). 
We point out that the assumptions on $\Omega$ (i.e., pseudoconvexity and having the Runge property) cannot be entirely removed from the statement of the theorem. Indeed, there are smoothly bounded relatively compact domains $\Omega\subset\c^2$ for which there is no proper holomorphic map from $\d$ to $\Omega$ passing through a certain point $z\in\Omega$ (see Forstneri\v c and Globevnik \cite{ForstnericGlobevnik1992CMH}); and also bounded domains in $\c^2$ containing no proper images of $\d$ (see Dor \cite{Dor1996MZ}).
It is an open question whether Theorem \ref{th:main-intro} remains valid when $\Omega$ is an arbitrary (non-Runge) pseudoconvex domain in $\c^2$, even in case $M=\d$ (see \cite[p.\ 559]{ForstnericGlobevnikStensones1996MA}). To this respect, it is known that the conclusion of Theorem \ref{th:main-intro} holds true for the pseudoconvex domains $\Omega=\c\times(\c\setminus\{0\})$ and $\Omega=(\c\setminus\{0\})^2$ (see Ritter \cite{Ritter2014Crelle} and also L\'arusson and Ritter \cite{LarussonRitter2014IUMJ}), which are not Runge in $\c^2$. 

The main achievement of Theorem \ref{th:main-intro} is of course the embeddedness of the examples. Recall that self-intersections of complex curves in $\c^2$ are stable under small deformations; this is why, in general, constructing embedded complex curves in $\c^2$ is a much more demanding task that in $\C^N$ for $N\ge 3$. Regarding this, we point out that the main result in \cite{ForstnericGlobevnikStensones1996MA}, which we recalled at the beginning of this introduction, is actually established for pseudoconvex Runge domains in $\c^N$ for arbitrary dimension $N\ge 2$. Furthermore, the analogue of Theorem \ref{th:main-intro} for arbitrary (possibly non-Runge) pseudoconvex domains in $\c^N$ for $N\ge 3$ easily follows from the results by Forstneri\v c and Slapar in \cite{ForstnericSlapar2007MRL} (see also \cite[\textsection 9.10]{Forstneric2017}) and by Drinovec Drnov\v sek and Forstneri\v c in \cite{DrinovecForstneric2007DMJ,DrinovecForstneric2010AJM}, even choosing the domain $D$ to agree with $M$ provided that $M$ is a bordered Riemann surface. Elaborating from the same results also shows that the conclusion of Theorem \ref{th:main-intro} holds true for pseudoconvex domains in $\c^2$ if one allows the holomorphic curves to have self-intersections. Previous partial results in this direction can be found in Globevnik \cite{Globevnik1988MA,Globevnik1989IM} and Forstneri\v c and Globevnik \cite{ForstnericGlobevnik1992CMH}. 
Looking at more general targets than pseudoconvex domains in $\c^N$, the analogue of Theorem \ref{th:main-intro} holds true for holomorphic embeddings into any Stein manifold, of dimension at least three, having the density property; in fact, in this framework there is no need to shrink the initial open Riemann surface and one may choose $D=M$ (see Andrist and Wold \cite{AndristWold2014AIF} and Andrist, Forstneri\v c, Ritter, and Wold \cite{AndristForstnericRitterWold2016JAM}). The same is valid for holomorphic {\em immersions} into any Stein surface with the density property (see \cite{AndristWold2014AIF,AndristForstnericRitterWold2016JAM} and also Forstneri\v c \cite{Forstneric2017Immersions}). Even more generally, in light of the results in \cite{DrinovecForstneric2007DMJ} one is led to expect that the analogue of Theorem \ref{th:main-intro} should also hold true for holomorphic immersions into an arbitrary Stein surface and for holomorphic embeddings into an arbitrary Stein manifold of dimension greater than two (without asking them to enjoy the density property).

Our proof follows the usual strategy of adding either a handle or an end at each step in a recursive process used in the construction of complex curves with arbitrary topology (see \cite{AlarconLopez2013JGA}), but with nontrivial modifications which enable to ensure the hitting condition and the properness in the given pseudoconvex Runge domain.
Besides some of the nice properties of these domains (see Section \ref{sec:prelim}), our method relies on the classical Mergelyan approximation theorem for holomorphic functions 
and the theory of holomorphic automorphisms of complex Euclidean spaces. The latter has already shown to be a powerful tool for constructing embedded complex submanifolds in $\c^N$ for $N\ge 2$, in particular, holomorphic curves in $\c^2$ (we refer to \cite[Chapter 4]{Forstneric2017} for a survey of results in the subject). In particular, the use of holomorphic automorphisms was crucial  in the method, different from ours, developed by Forstneri\v c, Globevnik, and Stens\o nes in \cite{ForstnericGlobevnikStensones1996MA}.

\subsubsection*{Outline of the paper}
Section \ref{sec:prelim} is devoted to introduce some notation and recall the basic concepts, definitions, and results that will be needed throughout this paper. In Section \ref{sec:main-intro} we state the main result of the paper (Theorem \ref{th:main}) and show how it implies Theorem \ref{th:main-intro}. Finally, we prove Theorem \ref{th:main} in Section \ref{sec:MT}.


\section{Preliminaries}\label{sec:prelim}

Given subsets $A$ and $B$ of a topological space $X$ we shall use the notation $A\Subset B$ to mean that the closure $\overline A$ of $A$ is contained in the interior $\mathring B$ of $B$. We also denote by $bA=\overline A\setminus\mathring A$ the frontier of $A$ in $X$. A subset $E\subset X$ is said to be {\em discrete} if every point in $E$ is isolated; if $E$ is closed, then this is equivalent to that no point of $X$ is a limit point of $E$. By a {\em domain} in $X$ we mean an open connected set, and the closure of a relatively compact domain shall be said to be a {\em compact domain}.

We denote $\n=\{1,2,3,\ldots\}$ and $\z_+=\n\cup\{0\}$. For any $N\in\n$ we denote by $|\cdot|$ and $\dist(\cdot,\cdot)$ the Euclidean norm and distance in the $\c^N$, respectively. 

Let $N\in\n$. A domain $\Omega\subset\c^N$ is said to be {\em pseudoconvex} if it carries a strictly plurisubharmonic exhaustion function; this happens if and only if $\Omega$ is holomorphically convex, if and only if $\Omega$ is a domain of holomorphy, and if and only if $\Omega$ is Stein. 
See Range \cite{Range2012NAMS} for a brief introduction to pseudoconvexity, and e.\ g.\  H\"ormander \cite{Hormander1990NH} or Range \cite{Range1986GTM} for further developments.

A domain $\Omega\subset\c^N$ is said to be {\em Runge} if every holomorphic function on $\Omega$ may be approximated, uniformly on compact subsets in $\Omega$, by holomorphic polynomials on $\c^N$. Likewise, a compact subset $L\subset \c^N$ is said to be {\em polynomially convex} if for each point $z \in\c^N\setminus L$ there is a (holomorphic) polynomial $P$ such that $|P(z)|>\sup\{|P(w)|\colon w\in L\}$; equivalently, if every holomorphic function on a neighborhood of $L$ may be approximated, uniformly on $L$, by polynomials on $\c^N$. We refer to Stout \cite{Stout2007PM} for a monograph on polynomial convexity. 

Given a pseudoconvex Runge domain $\Omega\subset\c^N$, a smooth plurisubharmonic exhaustion function $\varrho$ for $\Omega$, and a number $c\in\r$, the set
\[
	\Omega_c=\{z\in\Omega\colon \varrho(z)< c\}
\]
is relatively compact in $\Omega$ and a (possibly disconnected) Runge domain in $\Omega$ (the latter meaning that every holomorphic function on $\Omega_c$ may be approximated, uniformly on compact subsets in $\Omega_c$, by holomorphic functions on $\Omega$; see \cite[Theorem 1.3.7]{Stout2007PM}). Moreover, the set
\[
	\{z\in\Omega\colon \varrho(z)\le c\}
\]
is a polynomially convex compact set in $\c^N$ (see \cite[p. 25-26]{Stout2007PM}).

A compact set $K$ in an open Riemann surface $M$ is said {\em $\Ocal(M)$-convex} (also called {\em holomorphically convex} or {\em Runge} in $M$) if every continuous function from $K$ to $\c$ being holomorphic on $\mathring K$ may be approximated, uniformly on $K$, by holomorphic functions on $M$. By the Runge-Mergelyan theorem (see \cite{Runge1885AM,Mergelyan1951DAN,Bishop1958PJM}), this happens if and only if $M\setminus K$ has no relatively compact connected components in $M$.

A connected complex manifold is said to be {\em Liouville} if it does not carry non-constant negative plurisubharmonic functions; open Riemann surfaces which are not Liouville (i.e., carrying negative non-constant subharmonic functions) are called {\em hyperbolic} (see Farkas and Kra \cite[p.\ 179]{FarkasKra1992Springer}). If a connected open Riemann surface is hyperbolic, then so is every connected domain on it (viewed as an open Riemann surface). Throughout the paper, we shall always assume that Riemann surfaces are connected unless the contrary is stated. 

A {\em compact bordered Riemann surface} is a compact Riemann surface $R$ with nonempty boundary $bR\subset R$ consisting of finitely many pairwise disjoint  smooth Jordan curves; the interior $\mathring R=R\setminus bR$ of $R$ is said to be a {\em bordered Riemann surface}. Every bordered Riemann surface is hyperbolic. It is classical that every compact bordered Riemann surface is diffeomorphic to a smoothly bounded compact domain in an open Riemann surface. We shall denote by $\Ascr^1(R)$ the space of functions from $R$ to $\c$ of class $\Cscr^1$ which are holomorphic on $\mathring R$. 


\section{Statement of the main result and proof of Theorem \ref{th:main-intro}}\label{sec:main-intro}

The main result of the present paper may be stated as follows.

\begin{theorem}\label{th:main}
Let $\Omega\subset\c^2$ be a pseudoconvex Runge domain, $M$ be an open Riemann surface, $K\subset M$ be a connected, smoothly bounded, $\Ocal(M)$-convex compact domain, $E\subset M$ be a closed discrete subset, and 
\[
	f\colon K\cup E\to\Omega
\]
be a proper injective map such that $f|_K$ is a holomorphic embedding.
Then, given a number $\epsilon>0$ and a connected polynomially convex compact set $L\subset \Omega$ satisfying
\begin{equation}\label{eq:LhE}
	L\cap f(bK)=\varnothing
	\quad\text{and}\quad
	L\cap f(E\setminus K)=\varnothing,
\end{equation}
there are a Runge domain $D\subset M$ and a proper holomorphic embedding $\wt f\colon D\hra\Omega$ enjoying the following conditions:
\begin{itemize}
\item[\rm (a)] $K\cup E\subset D$ and the domain $D$ is a deformation retract of (and hence homeomorphic to) $M$.
\smallskip
\item[\rm (b)] $|\wt f(p)-f(p)|<\epsilon$ for all $p\in K$.
\smallskip
\item[\rm (c)] $\wt f(p)=f(p)$ for all $p\in E$.
\smallskip
\item[\rm (d)] $\wt f(D\setminus \mathring K)\cap L=\varnothing$.
\end{itemize}
Furthermore, the domain $D$ may be chosen of hyperbolic type.
\end{theorem}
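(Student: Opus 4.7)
The plan is to run the standard exhaustion/recursion for open Riemann surfaces of possibly infinite topology (adding an end or a handle at each step), enhanced to carry along exact interpolation at $E$ and properness in the prescribed Runge pseudoconvex target $\Omega$. Fix a smooth strictly plurisubharmonic exhaustion $\varrho$ of $\Omega$ and regular values $c_{-1}<c_0<c_1<\cdots\to\sup\varrho$, so that by the preliminaries the sublevel sets $L_n:=\{\varrho\le c_n\}$ are polynomially convex compacts in $\c^2$ exhausting $\Omega$, with $L_{-1}\supset L\cup f(K)$. Exhaust $M$ by connected, smoothly bounded, $\Ocal(M)$-convex compact domains $K=K_0\Subset K_1\Subset\cdots$ with $bK_n\cap E=\varnothing$, chosen so that each $K_{n+1}\setminus\mathring K_n$ is either a union of annular collars (no change of topology) or contains a single $1$-handle. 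The recursion produces $\Ascr^1$ holomorphic embeddings $f_n\colon K_n\hra\Omega$ satisfying: (i) $f_n=f$ on $E\cap K_n$; (ii) $\|f_n-f_{n-1}\|_{K_{n-1}}<\epsilon_n$ for a preassigned summable sequence $(\epsilon_n)$ with $\sum_n\epsilon_n<\epsilon$; (iii) $f_n(bK_n)\cap L_n=\varnothing$; (iv) $f_n(K_n)$ polynomially convex in $\c^2$; (v) $f_n(K_n\setminus\mathring K_0)\cap L=\varnothing$.

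\textbf{The inductive step.} Given $f_n$, I would first select finitely many disjoint smooth embedded arcs $\gamma_j\subset K_{n+1}\setminus\mathring K_n$ that pass through every point of $E\cap(K_{n+1}\setminus K_n)$ and realize the topological change from $K_n$ to $K_{n+1}$, so that $K_n':=K_n\cup\bigcup_j\gamma_j$ is $\Ocal(M)$-convex and a deformation retract of $K_{n+1}$. Next, extend $f_n$ continuously to a smooth embedding $g\colon K_n'\to\Omega$ with $g=f$ at the new $E$-points, with $g(\gamma_j)\subset\Omega\setminus\mathring L_n$ away from $bK_n$, and with the terminal endpoints of each $\gamma_j$ lying outside $L_{n+1}$; a generic choice of arcs in $\Omega$ will render $g(K_n')$ polynomially convex in $\c^2$, since attaching a smooth arc whose interior is disjoint from the hull to a polynomially convex compact preserves polynomial convexity. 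Mergelyan approximation on $g(K_n')$, combined with $\Ocal(M)$-convexity of $K_n'$ and standard jet interpolation at the new $E$-points, then yields a holomorphic map $h$ on a neighborhood of $K_n'$ in $M$ that approximates $g$ uniformly, interpolates $f$ exactly on $E$, and, by openness of the embedding condition under $\Cscr^1$-close perturbation of smooth embeddings of compacta, is itself a holomorphic embedding into $\Omega$. Finally, to extend $h$ across the annular collar $K_{n+1}\setminus\mathring K_n'$ with the new boundary pushed past $L_{n+1}$, I would interpose an Andersen--Lempert holomorphic automorphism of $\c^2$ arbitrarily close to the identity on the polynomially convex hull of $h(K_n')\cup L_n$ and carrying the tips of the next boundary piece outside $L_{n+1}$; a further Mergelyan approximation on the resulting polynomially convex configuration (with interpolation at $E$) produces $f_{n+1}$ on $K_{n+1}$ satisfying (i)--(v).

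\textbf{Passage to the limit and hyperbolicity.} Setting $D:=\bigcup_n\mathring K_n$ and $\wt f:=\lim_n f_n$, summability of $(\epsilon_n)$ gives uniform convergence on compacts of $D$, so $\wt f$ is holomorphic, while openness of embeddedness passes injectivity to the limit on each $K_n$ and hence on $D$. Properties (a)--(d) of the theorem follow directly from (i)--(v); in particular $\wt f^{-1}(L_n)\subset K_n$ yields properness, $\Ocal(M)$-convexity of each $K_n$ forces $D$ to be Runge in $M$, and the topological type of each collar makes $D$ deformation-retract onto $M$. To obtain $D$ of hyperbolic type, one arranges the exhaustion to leave a fixed smoothly bounded closed disc in $M$ outside every $K_n$, which does not affect the homotopy type of $D$ nor the inductive construction.

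\textbf{Main obstacle.} The delicate point is the inductive step, where four constraints must be juggled at once: embeddedness in $\c^2$ (which in $\c^N$ for $N\ge 3$ would follow from general position but here is unstable under perturbation), polynomial convexity of $f_n(K_n)$ (needed to invoke both Mergelyan and Andersen--Lempert), exact interpolation of $f$ at $E$, and pushing the boundary past $L_{n+1}$. The combined use of Mergelyan with jet interpolation on polynomially convex sets and of automorphisms of $\c^2$---which, being holomorphic, cannot destroy polynomial convexity or the embedding property once secured---is what I expect to render these four demands simultaneously achievable.
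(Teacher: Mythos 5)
Your overall plan is right in broad strokes (exhaustion of $\Omega$ by polynomially convex compacts, $\Ocal(M)$-convex exhaustion of $M$ with controlled Euler characteristic, Mergelyan with interpolation along arcs, and a final automorphism to restore exact interpolation at $E$), and these are all ingredients the paper uses. But the crucial technical step --- pushing $f_{n+1}(bK_{n+1})$ past $L_{n+1}$ while keeping an \emph{embedding} into $\c^2$ --- is not handled by what you propose. An Andersen--Lempert automorphism ``close to the identity on the hull of $h(K_n')\cup L_n$ and carrying the tips of the next boundary piece outside $L_{n+1}$'' followed by another Mergelyan step does not yield a holomorphic embedding of the compact domain $K_{n+1}$ with boundary image disjoint from $L_{n+1}$. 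In $\c^2$ (unlike $\c^N$ with $N\ge 3$) an embedding is unstable under perturbation of the boundary: the extension across the annular collar may create self-intersections, and nothing in your sketch prevents this. The paper resolves this by inserting the Wold/Forstneri\v c--Wold machinery (exposed boundary points, rational shears, Fatou--Bieberbach domains) as its Lemma 4.2: it produces a \emph{proper} holomorphic embedding of the open bordered surface $\mathring R\hra\c^2$ whose image over $R\setminus\mathring K$ still avoids the prescribed polynomially convex set, and then exploits properness to find a compact $M_j$ with $\wt\phi(bM_j)\cap L_j=\varnothing$. Without an analogue of that lemma your recursion cannot close.

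Two secondary points. First, your invariant (iv), polynomial convexity of $f_n(K_n)$ in $\c^2$, is unnecessary for the argument and is also not justified by the assertion that generic arc-attachment preserves polynomial convexity --- that claim concerns attaching an arc to a fixed compact, not the image of the enlarged surface $K_n'$ under a new holomorphic map; the paper never requires polynomial convexity of the image of the curve and instead works solely with the polynomially convex exhaustion $L_j$ of $\Omega$. Second, your hyperbolicity device (``leave a fixed closed disc in $M$ outside every $K_n$'') does not directly ensure condition (a): $M\setminus\overline\D$ has a different homotopy type from $M$, so one must arrange the exhaustion to fill out a Runge subdomain $M'\subset M$ that is simultaneously hyperbolic \emph{and} a deformation retract of $M$, and then run the construction inside $M'$; the paper does exactly this (and refers to \cite{AlarconLopez2013JGA} for the existence of such $M'$), but your formulation glosses over the needed topological care.
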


Note that, since $E\subset M$ is closed and discrete, $K\subset M$ is compact, and $f|_K$ is continuous, the map $f$ is continuous as well. Thus, by compactness of $K$, the assumption that $f\colon K\cup E\to\Omega$ is proper is equivalent to that $f|_E\colon E\to\Omega$ is a proper map; i.e., $(f|_E)^{-1}(C)\subset E$ is finite for any compact set $C\subset\Omega$.

We defer the proof of Theorem \ref{th:main} to Section \ref{sec:MT}. Let us first see that it implies Theorem \ref{th:main-intro}.
%
%
\begin{proof}[Proof of Theorem \ref{th:main-intro} assuming Theorem \ref{th:main}]
Let $\Omega\subset\c^2$ be a pseudoconvex Runge domain, $M$ be an open Riemann surface, $E\subset M$ be a closed discrete subset, and $f\colon E\to\Omega$ be a proper injective map. Choose a simply-connected, smoothly bounded, compact domain $K$ in $M$ with $K\cap E=\varnothing$, and extend $f$ to $K\cup E$ as an injective map that is a holomorphic embedding on $K$. Also choose a connected polynomially convex compact set $L\subset\Omega\setminus f(K\cup E)$. Theorem \ref{th:main} applied to these objects and any number $\epsilon>0$ furnishes a Runge domain $D\subset M$ and a proper holomorphic embedding $\wt f\colon D\hra \Omega$ such that $D$ contains $E$, $D$ is a deformation retract of $M$, and $\wt f(p)=f(p)$ for all points $p\in E$. This completes the proof of Theorem \ref{th:main-intro} under the assumption that Theorem \ref{th:main} is valid.
\end{proof}
Furthermore, since the domain $D\subset M$ in Theorem \ref{th:main} may be chosen of hyperbolic type, the argument in the above proof shows that the same holds true (as we claimed in the introduction) for the domain $D$ in Theorem \ref{th:main-intro}.

We finish this section with the following corollary of Theorem \ref{th:main}, which is a more precise version of Corollary \ref{co:intro}.
\begin{corollary}\label{co:main}
Let $\Omega$, $M$, $K$, $E$, and $f$ be as in Theorem \ref{th:main} and denote by $J_0$ the complex structure on $M$. Then there exists a complex structure $J$ on $M$ such that $J=J_0$ on a connected neighborhood of $K\cup E$ in $M$ and there is a map $\wt f\colon M\to\Omega$ that is a proper holomorphic embedding with respect to $J$, approximates $f$ uniformly on $K$, and $\wt f|_E=f$.
Furthermore, the complex structure $J$ may be chosen so that the open Riemann surface $(M,J)$ is hyperbolic.
\end{corollary}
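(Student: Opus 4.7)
The plan is to reduce immediately to Theorem \ref{th:main} and transport the complex structure from the Runge subdomain $D\subset M$ it furnishes back to all of $M$ via a carefully chosen diffeomorphism.

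First I would apply Theorem \ref{th:main} to the data $(\Omega,M,K,E,f)$, an arbitrary $\epsilon>0$, and a connected polynomially convex compact set $L\subset\Omega$ satisfying \eqref{eq:LhE} (for instance a small polydisc disjoint from the finite set $f(bK)$ and from $f(E\setminus K)$; by compactness of $K$ and properness of $f|_E$ this is always possible). This yields a Runge domain $D\subset M$ of hyperbolic type containing $K\cup E$, a deformation retract of $M$, together with a proper holomorphic embedding $\wt g\colon D\hra\Omega$ with $\wt g|_E=f|_E$ and $\|\wt g-f\|_{K}<\epsilon$.

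The key step is to construct a diffeomorphism $\Phi\colon M\to D$ which is the identity on an open neighborhood $V$ of $K\cup E$ in $M$ (we may assume $V\Subset D$ since $K$ is compact, $E$ is closed and discrete, and $K\cup E\subset D$). Such a $\Phi$ exists by a relative version of the Ker\'ekj\'art\'o--Richards classification of open orientable surfaces: because $D$ is a deformation retract of the connected orientable smooth surface $M$, the inclusion $D\hra M$ is a homotopy equivalence, so the two surfaces have the same genus and the same space of ends; hence they are diffeomorphic. Standard isotopy-extension arguments applied to the smoothly embedded closed set $V\subset D\subset M$ then allow one to realize such a diffeomorphism as the identity on $V$. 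Define $J$ to be the pull-back $\Phi^{*}J_D$ of the complex structure $J_D:=J_0|_D$ of $D$; since $\Phi|_V=\mathrm{id}_V$, we have $J=J_0$ on $V$, which in particular is a connected neighborhood of $K\cup E$. Setting $\wt f:=\wt g\circ\Phi\colon M\to\Omega$ then gives a proper holomorphic embedding of $(M,J)$ into $\Omega$ which agrees with $f$ on $E$ and approximates $f$ uniformly on $K$ within $\epsilon$.

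For hyperbolicity, since the last assertion of Theorem \ref{th:main} lets us take $D$ hyperbolic and $(M,J)$ is biholomorphic to $D$ via $\Phi$, the Riemann surface $(M,J)$ is automatically hyperbolic as well. The main technical point is the relative classification step producing the diffeomorphism $\Phi$ that is the identity on a neighborhood of $K\cup E$; once this is in hand, everything else is a formal transport of structure through $\Phi$.
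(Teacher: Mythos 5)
Your overall plan is exactly the paper's: apply Theorem~\ref{th:main} to obtain $D$ and a proper holomorphic embedding, then transport the complex structure $J_0|_D$ back to $M$ via a diffeomorphism $\Phi\colon M\to D$ that is the identity near $K\cup E$, and set $\wt f=\wt g\circ\Phi$. The formal transport of structure at the end is correct.

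However, the justification you give for the existence of $\Phi$ contains an error. You claim that because the inclusion $D\hookrightarrow M$ is a homotopy equivalence, ``the two surfaces have the same genus and the same space of ends.'' This implication is false for open surfaces in general: a once-punctured torus and a pair of pants are both homotopy equivalent to a wedge of two circles, yet one has genus $1$ and one end while the other has genus $0$ and three ends. Homotopy equivalence does not control the end space (nor, by itself, the genus) of a noncompact surface. Fortunately you do not need this route: condition~(a) of Theorem~\ref{th:main} explicitly states that $D$ is a deformation retract of \emph{and hence homeomorphic to} $M$ --- this topological identification should simply be quoted from the theorem, not re-derived via the Ker\'ekj\'art\'o--Richards classification. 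The substance behind that assertion is the finer, inductive condition~$(1_j)$ in Lemma~\ref{lem:Induction} (``$M_j$ homeomorphically isotopic to $K_j$''), which is what actually guarantees that an exhaustion-by-exhaustion diffeomorphism $M\to D$ fixing a neighborhood of $K\cup E$ can be built. Once you invoke that, the remainder of your argument --- in particular taking $D$ hyperbolic and noting that $(M,J)$ is biholomorphic to $D$ --- matches the paper's proof.
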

\begin{proof}
Let $D\subset M$ and $\wt f\colon D\hra\Omega$ be the domain and the holomorphic embedding provided by Theorem \ref{th:main} applied to the given data. It is then clear that the complex structure $J$ on $M$ which makes it biholomorphic to $D$ and the map $\wt f\colon D=(M,J)\to\Omega$ satisfy the conclusion of the corollary. Note that the fact that $J=J_0$ on a connected neighborhood of $K\cup E$ is implied by condition {\rm (a)} in the theorem.
\end{proof}

%
%

\section{Proof of Theorem \ref{th:main}}\label{sec:MT}

Let $\Omega$, $M$, $K$, $E$, $f$, and $L$ be as in the statement of Theorem \ref{th:main}, and fix a positive number $\epsilon>0$.
Set $L_0:=L$ and choose an increasing sequence of (connected) polynomially convex compact domains
\begin{equation}\label{eq:cupLj}
	L_1\Subset L_2\Subset \cdots\Subset \bigcup_{j\in\n} L_j=\Omega
\end{equation}
in $\c^2$ such that $L_0\subset \mathring L_1$ and
\begin{equation}\label{eq:bLj}
	f(E)\cap bL_j=\varnothing \quad\text{for all $j\in\n$}.
\end{equation}
Such a sequence may be constructed as follows. Take a smooth plurisubharmonic exhaustion function $\varrho\colon\Omega\to\r$ (recall that $\Omega$ is pseudoconvex) and a sequence of real numbers $c_1<c_2<\cdots$ such that $\lim_{j\to\infty}c_j=+\infty$ and each $c_j$ is a regular value of $\varrho$ that satisfies $f(E)\cap \{z\in\Omega\colon \varrho(z)=c_j\}=\varnothing$ (recall that $f(E)\subset\Omega$ is closed and discrete).
Choose $c_1$ large enough so that $L_0\subset \{z\in\Omega\colon \varrho(z)<c_1\}$. Thus, it suffices to define $L_j$ as the connected component of $\{z\in\Omega\colon \varrho(z)\le c_j\}$ containing $L_0$, $j\in\n$.

Consider the following exhaustions of $f(E)\subset \Omega$ and $E\subset M$:
\begin{equation}\label{eq:Ej}
	\Lambda_j:=f(E)\cap L_j=f(E)\cap\mathring L_j \quad\text{and}\quad E_j:=f^{-1}(\Lambda_j)\subset,\quad j\in\n
\end{equation}
(see \eqref{eq:bLj} and recall that $f\colon K\cup E\to\Omega$ is injective.)
Thus, \eqref{eq:cupLj} ensures that $\Lambda_j\subset \Lambda_{j+1}$ for all $j\in\n$ and
\begin{equation}\label{eq:cupEj}
	E_1\subset E_2\subset\cdots\subset\bigcup_{j\in\n} E_j=E.
\end{equation}
Set $E_0:=E\cap K$ and $\Lambda_0:=f(E_0)$ and assume without loss of generality that $L_1$ is chosen large enough so that $f(K)\subset\mathring L_1$. We then have that $\Lambda_0\subset\Lambda_1$ and $E_0\subset E_1$. It is clear that if $E_j\neq\varnothing$ for a given $j\in\z_+$, then $E_j$ is finite and $f|_{E_j}\colon E_j\to\Lambda_j$ is a bijection; recall that the set $E\subset M$ is closed and discrete and the map $f|_E\colon E\to\Omega$ is proper and injective.
In the open Riemann surface $M$ we choose an increasing sequence of (connected) smoothly bounded $\Ocal(M)$-convex compact domains
\begin{equation}\label{eq:cupKj}
	K_0:=K\Subset K_1\Subset K_2\Subset \cdots\Subset \bigcup_{j\in\n} K_j=M
\end{equation}
with the property that the Euler characteristic
\begin{equation}\label{eq:Euler}
	\chi(K_j\setminus\mathring K_{j-1})\in\{-1,0\}\quad \text{for all $j\in\n$.}
\end{equation}
Such can be constructed by standard topological arguments; we refer for instance to \cite[Lemma 4.2]{AlarconLopez2013JGA} for a detailed proof. 

Finally, set $M_0:=K_0$ and $f_0:=f|_{M_0}\colon M_0\to\Omega$ and fix a number $0<\epsilon_0<\epsilon/2$. 

The main step in the proof of the theorem is enclosed in the following result.
%
%
\begin{lemma}\label{lem:Induction}
There are
\begin{itemize}
\item[\rm (A)] an increasing sequence of (connected) smoothly bounded $\Ocal(M)$-convex compact domains $M_1\Subset M_2\Subset \cdots$ with $M_0\subset \mathring M_1$,
\smallskip
\item[\rm (B)] a sequence of holomorphic embeddings $f_j\colon M_j\to \Omega$ $(j\in\n)$,  and
\smallskip
\item[\rm (C)] a decreasing sequence of numbers $\epsilon_j>0$ $(j\in\n)$,
\end{itemize}
such that the following conditions are satisfied for all $j\in\n$:
\begin{itemize}
\item[\rm (1$_j$)] $M_j$ is {\em homeomorphically isotopic} to $K_j$, meaning that 
\[
	(\imath_{M_j})_*(H_1(M_j;\z))=(\imath_{K_j})_*(H_1(K_j;\z))\subset H_1(M;\z)
\]
where $(\imath_{M_j})_*\colon H_1(M_j;\z)\to H_1(M;\z)$ and $(\imath_{K_j})_*\colon H_1(K_j;\z)\to H_1(M;\z)$ are the homomorphisms between the first homology groups with integer coefficients induced by the inclusion maps $\imath_{M_j}\colon M_j\to M$ and $\imath_{K_j}\colon K_j\to M$, respectively. (Notice that $(\imath_{M_j})_*$ and $(\imath_{K_j})_*$ are injective homomorphisms since $M_j$ and $K_j$ are $\Ocal(M)$-convex.)
\smallskip
\item[\rm (2$_j$)] $M_j\cap E=E_j$.
\smallskip
\item[\rm (3$_j$)] $|f_j(p)-f_{j-1}(p)|<\epsilon_{j-1}$ for all $p\in M_{j-1}$.
\smallskip
\item[\rm (4$_j$)] $f_j(p)=f(p)$ for all $p\in E_j$.
\smallskip
\item[\rm (5$_j$)] $f_j(M_j)\cap f(E\setminus E_j)=\varnothing$.
\smallskip
\item[\rm (6$_j$)] $f_j(M_j)\subset \mathring L_{j+1}$.
\smallskip
\item[\rm (7$_j$)] $f_j(bM_j)\cap L_j=\varnothing$.
\smallskip
\item[\rm (8$_j$)] $f_j(M_i\setminus\mathring M_{i-1})\cap L_{i-1}=\varnothing$ for all $i=1,\ldots,j$.
\smallskip
\item[\rm (9$_j$)] $0<\epsilon_j<\epsilon_{j-1}/2$.
\smallskip
\item[\rm (10$_j$)] If $g\colon M\to\c^2$ is a holomorphic map such that $|g(p)-f_j(p)|<2\epsilon_j$ for all $p\in M_j$, then $g|_{M_{j-1}}\colon M_{j-1}\to\c^2$ is an embedding with $g(M_{j-1})\subset\Omega$  and  $g(M_i\setminus\mathring M_{i-1})\cap L_{i-1}=\varnothing$ for all $i=1,\ldots,j-1$.
\end{itemize}
\end{lemma}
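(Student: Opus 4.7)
The plan is to construct $\{M_j\}_{j\in\n}$, $\{f_j\}_{j\in\n}$, and $\{\epsilon_j\}_{j\in\n}$ by induction, with base data $M_0 = K$, $f_0 = f|_K$, and $\epsilon_0 \in (0,\epsilon/2)$ already chosen. Suppose the construction has been carried out through index $j-1$ so that conditions (1$_{j-1}$)--(10$_{j-1}$) hold; I will describe the inductive step producing $M_j$, $f_j$, and $\epsilon_j$.

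Topologically, the assumption $\chi(K_j \setminus \mathring K_{j-1}) \in \{-1,0\}$ says that, up to isotopy in $M$, $K_j$ is obtained from $K_{j-1}$ by attaching either a single $1$-handle or a single disc (an ``end''). Mirroring this, I would choose a smoothly embedded arc $\gamma \subset M \setminus \mathring M_{j-1}$ carrying the correct new homology class (two endpoints on $bM_{j-1}$ in the handle case, one endpoint on $bM_{j-1}$ and one free endpoint in the end case), together with finitely many pairwise disjoint smooth arcs $\gamma_1,\ldots,\gamma_m$ joining $bM_{j-1}$ to the finitely many nodes of $E_j \setminus E_{j-1}$. With a generic choice, $S := M_{j-1} \cup \gamma \cup \bigcup_k \gamma_k$ is admissible in the sense of Mergelyan theory on $M$, and a smooth thickening of $S$ will serve as the $\Ocal(M)$-convex compact domain $M_j$, yielding (1$_j$) and (2$_j$).

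The analytic step proceeds in two stages. Using that $f(E_j \setminus E_{j-1}) \subset \mathring L_j \setminus L_{j-1}$ (by \eqref{eq:bLj} and \eqref{eq:Ej}) and that $\Omega \setminus L_{j-1}$ is connected, I would first extend $f_{j-1}$ continuously along $\gamma$ and each $\gamma_k$ to a map $\widetilde f \colon S \to \Omega$ hitting $f(q)$ at each new node $q$, routing the interior of each new arc through $\Omega \setminus L_{j-1}$ (with an excursion well outside $L_j$ on the segment that will support the thickened boundary $bM_j$), and avoiding the closed discrete set $f(E \setminus E_j)$. A Mergelyan--Runge theorem on $M$ with jet interpolation at the finite set $E_j$ --- applicable since $S$ is $\Ocal(M)$-convex --- then yields a holomorphic map $g$ on a neighborhood of $S$ uniformly close to $\widetilde f$ on $S$ and satisfying $g|_{E_j} = f|_{E_j}$ exactly. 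To promote $g$ to an embedding without disturbing interpolation, I would compose with a holomorphic automorphism $\Phi$ of $\c^2$ supplied by Anders\'en--Lempert theory: the polynomially convex compactum $g(\overline U) \cup \{f(q) : q \in E_j\}$ (for a small neighborhood $U$ of $S$) can be arranged to lie in $L_{j+1}$, and on such a set every isotopy of the identity is approximable by holomorphic automorphisms of $\c^2$. Pick $\Phi$ close to the identity on $g(M_{j-1}) \cup \{f(q) : q \in E_j\}$ and separating the finitely many self-intersections of $g$. Set $f_j := \Phi \circ g$, choose the thickening $M_j$ so that $f_j(bM_j) \cap L_j = \varnothing$ and $f_j(M_j) \subset \mathring L_{j+1}$ (which is possible by continuity because of the controlled behavior of $\widetilde f$ on $S$), and finally pick $\epsilon_j \in (0,\epsilon_{j-1}/2)$, by compactness and the openness of embeddedness and of the avoidance conditions, small enough that (10$_j$) holds.

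The principal obstacle is the embedding step, and this is where the argument genuinely departs from its easier target-dimension $N \geq 3$ analogue. In $\c^2$, self-intersections of a holomorphic immersion of a complex curve are stable under generic $\Cscr^0$-small intrinsic perturbations, so they cannot be removed merely by approximating within the space of holomorphic maps $M_j \to \c^2$. Instead one must use ambient automorphisms of $\c^2$ that simultaneously separate the colliding branches, honor the interpolation $f_j|_{E_j} = f|_{E_j}$, and preserve the avoidance conditions (7$_{j-1}$)--(8$_{j-1}$) on the previously constructed portion $f_j(M_{j-1})$. It is precisely the polynomial convexity of each $L_j$ and the Runge property of $\Omega$ that allow Anders\'en--Lempert theory to deliver holomorphic automorphisms of $\c^2$ approximating the required isotopies simultaneously on all the relevant polynomially convex compacta, reconciling these competing demands in a single composition.
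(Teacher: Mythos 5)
Your proposal correctly identifies the inductive framework (topologically attaching a handle or an end plus arcs to the new points of $E_j$, Mergelyan with interpolation, automorphisms to fix the hitting condition), but it has two genuine gaps, each of which accounts for a substantial part of the paper's argument.

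First, condition (7$_j$), namely $f_j(bM_j)\cap L_j=\varnothing$, cannot be obtained merely by ``routing the interior of each new arc through $\Omega\setminus L_{j-1}$ with an excursion well outside $L_j$.'' The thickened domain $M_j$ has boundary $bM_j$ that, away from the new arcs, lies close to $bM_{j-1}$; the inductive hypothesis (7$_{j-1}$) only gives $f_{j-1}(bM_{j-1})\cap L_{j-1}=\varnothing$, whereas (6$_{j-1}$) forces $f_{j-1}(M_{j-1})\subset\mathring L_j$. Hence a small Mergelyan perturbation of $f_{j-1}$ on a thin neighborhood of $S$ will still send most of $bM_j$ \emph{inside} $L_j$, and no amount of routing the arcs cures this. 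This is precisely why the paper needs Lemma~\ref{lem:ML}: starting from an embedding $\phi$ of a compact bordered surface $R$, it produces a \emph{proper} holomorphic embedding $\wt\phi\colon\mathring R\hookrightarrow\c^2$ (using the Wold and Forstneri\v c--Wold machinery of exposed boundary points, rational shears, and Fatou--Bieberbach domains) that approximates $\phi$ on a compact core $R_0$ and still avoids the polynomially convex set $\Lscr$ outside $R_0$. Only the properness of $\wt\phi$ lets one then pick $M_j$ with $R_0\Subset M_j\Subset R$ so that $\wt\phi(bM_j)$ escapes $L_j$ and $\wt\phi(M_j)\subset\mathring L_{j+1}$. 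Your proposal omits this entirely.

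Second, post-composing $g$ with an automorphism $\Phi$ of $\c^2$ cannot ``separate the finitely many self-intersections of $g$'': if $g(p)=g(q)$ with $p\neq q$, then $\Phi(g(p))=\Phi(g(q))$ for any map $\Phi$. In the paper this issue never arises because Mergelyan's theorem with interpolation (e.g.\ \cite[Corollary 5.4.7]{Forstneric2017}), applied to the embedding $f_{j-1}$ of the $\Ocal(M)$-convex compact set $M_{j-1}\cup\Gamma$, directly yields a holomorphic \emph{embedding} $\phi$ on a thin smoothly bounded neighborhood $R$ of $M_{j-1}\cup\Gamma$; no automorphism is needed to remove self-intersections. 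The automorphism (Lemma~\ref{lem:hitting}, due to Globevnik) is used only in the final step, to correct the small error introduced by the proper embedding step so that $f_j$ interpolates $f$ exactly on $E_j$ while remaining a small perturbation on $r\overline{\b}\supset L_{j+1}$.
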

Condition {\rm (1$_j$)} in the lemma is equivalent to the existence of a compact domain $K_j'\subset M$ such that both $M_j$ and $K_j$ are strong deformation retracts of $K_j'$.

We defer the proof of Lemma \ref{lem:Induction} to the next subsections. Now, let us assume for a moment that the lemma holds true and let us show that it enables to complete the proof of Theorem \ref{th:main}. Set
\[
	D:=\bigcup_{j\in\z_+} M_j\subset M.
\]
Properties \eqref{eq:cupKj}, {\rm (A)}, {\rm (1$_j$)}, and {\rm (2$_j$)} guarantee that $D$ is a Runge domain in $M$ and satisfies condition {\rm (a)}; recall that $K=K_0=M_0$. On the other hand, by properties {\rm (3$_j$)} and {\rm (9$_j$)} in the lemma, there is a limit holomorphic map
\[
	\wt f:=\lim_{j\to\infty} f_j\colon D\to\c^2
\]
such that
\[
	|\wt f(p)-f_j(p)|<2\epsilon_j<\epsilon\quad \text{for all $j\in\z_+$}.
\]
Thus, taking into account properties \eqref{eq:cupLj}, \eqref{eq:cupEj}, {\rm (4$_j$)}, and {\rm (10$_j$)}, $j\in\n$, we infer that $\wt f$ is a proper holomorphic embedding from $D$ into $\Omega$ and meets conditions {\rm (b)}, {\rm (c)}, and {\rm (d)}; recall that $L_0=L$. 

Summarizing, the domain $D$ and the embedding $\wt f$ satisfy the conclusion of the theorem except for the final assertion that the domain $D$ can be chosen to be an open Riemann surface of hyperbolic type. In order to guarantee this condition it suffices to choose a Runge domain $M'\subset M$ of hyperbolic type such that $K\cup E\subset M$ and $M'$ is a deformation retract of $M$ (existence of such is well known, it can be easily proved, for instance, by a straightforward modification of the arguments in \cite{AlarconLopez2013JGA}), and apply the first part of Theorem \ref{th:main} (which we have just checked that holds true) to the same data but replacing the given open Riemann surface $M$ by $M'$. It follows that the domain $D\subset M'\subset M$ and the map $\wt f\colon D\to\Omega$ which we obtain in this way satisfy the first part of Theorem \ref{th:main} with respect to the open Riemann surface $M'$. Since $M'$ is of hyperbolic type and a deformation retract of $M$, we infer that $D$ is also of hyperbolic type and, by condition {\rm (a)}, a deformation retract of $M$. Moreover, since $M'$ is a Runge domain in $M$ and $D$ is a Runge domain in $M'$, $D$ is also a Runge domain in $M$. Therefore, the hyperbolic-type domain $D$ and the map $\wt f$ satisfy the conclusion of the theorem with respect to the open Riemann surface $M$ as well.

This completes the proof of Theorem \ref{th:main} granted Lemma \ref{lem:Induction}.

%
%

\subsection{Proof of Lemma \ref{lem:Induction}}\label{sec:Induction}

We proceed by induction.
The basis is given by the already fixed $M_0$, $f_0$, and $\epsilon_0$; notice that, taking into account \eqref{eq:LhE} and that $f(K\cup E)\subset\Omega$, these objects satisfy conditions {\rm (1$_0$)}, {\rm (2$_0$)}, {\rm (4$_0$)}, {\rm (5$_0$)}, and {\rm (7$_0$)}, while the other ones are vacuous for $j=0$.

For the inductive step assume that  for some $j\in\n$ we already have sets $M_i$, maps $f_i$, and numbers $\epsilon_i$ satisfying the required properties for $i=0,\ldots,j-1$, and let us provide $M_j$, $f_j$, and $\epsilon_j$. We distinguish cases depending on the Euler characteristic of $K_j\setminus\mathring K_{j-1}$, which, by \eqref{eq:Euler}, is either $-1$ or $0$.

%
%

\subsection*{Case 1: Assume that the Euler characteristic $\chi(K_j\setminus\mathring K_{j-1})$ equals $-1$} 
In this case $K_j\setminus\mathring K_{j-1}$ is composed of finitely many compact annuli and exactly one {\em pair of pants}, i.e., a compact domain in $M$ which is homeomorphic to a topological sphere from which three open topological discs whose closures are pairwise disjoint have been removed. Thus, taking into account {\rm (1$_{j-1}$), there is a smooth Jordan arc $\gamma\subset M\setminus (E\cup \mathring M_{j-1})$, with the two endpoints in $bM_{j-1}$ and being otherwise disjoint from $M_{j-1}$, such that $M_{j-1}\cup\gamma$ is $\Ocal(M)$-convex and the image of the injective group homomorphism 
\[
	(\imath_{M_{j-1}\cup\gamma})_*\colon H_1(M_{j-1}\cup\gamma;\z)\to H_1(M;\z)
\]
equals $(\imath_{K_j})_*(H_1(K_j;\z))$, where $\imath_{M_{j-1}\cup\gamma}\colon M_{j-1}\cup\gamma\to M$ denotes the inclusion map. By properties {\rm (6$_{j-1}$)} and {\rm (7$_{j-1}$)} we may extend $f_{j-1}$, with the same name, to a smooth embedding $f_{j-1}\colon M_{j-1}\cup\gamma\to\c^2$ such that 
\begin{equation}\label{eq:fj-1gamma}
	f_{j-1}(\gamma)\subset \mathring L_j\setminus (L_{j-1}\cup f(E)).
\end{equation}
Thus, if we are given a number $\epsilon'>0$, then, by Mergelyan's theorem with interpolation (see e.g.\ \cite[Corollary 5.4.7]{Forstneric2017}), there are a small compact neighborhood $M_{j-1}'$ of $M_{j-1}\cup\gamma$ and a holomorphic embedding $f_{j-1}'\colon M_{j-1}'\to\Omega$ satisfying the following properties:
\begin{itemize}
\item[$\bullet$] $M_{j-1}'$ is homeomorphically isotopic to $K_j$ in the sense of {\rm (1$_j$)}.
\smallskip
\item[$\bullet$] $M_{j-1}'\cap E=E_{j-1}$.
\smallskip
\item[$\bullet$] $|f_{j-1}'(p)-f_{j-1}(p)|<\epsilon'$.
\smallskip
\item[$\bullet$] $f_{j-1}'(p)=f(p)$ for all $p\in E_{j-1}$. (Take into account {\rm (4$_{j-1}$)}.)
\smallskip
\item[$\bullet$] $f_{j-1}'(M_{j-1}')\cap f(E\setminus E_{j-1})=\varnothing$. (Take into account {\rm (5$_{j-1}$)}.)
\smallskip
\item[$\bullet$] $f_{j-1}'(M_{j-1}')\subset \mathring L_j$. (Take into account {\rm (6$_{j-1}$)}.)
\smallskip
\item[$\bullet$] $f_{j-1}'(M_{j-1}'\setminus \mathring M_{j-1})\cap L_{j-1}=\varnothing$. (Take into account {\rm (7$_{j-1}$)} and \eqref{eq:fj-1gamma}.)
\smallskip
\item [$\bullet$]$f_{j-1}'(M_i\setminus \mathring M_{i-1})\cap L_{i-1}=\varnothing$ for all $i=0,\ldots, j-1$. (See {\rm (8$_{j-1}$)}.)
\end{itemize}
In view of \eqref{eq:Euler}, this reduces the proof of the inductive step to the case when $\chi(K_j\setminus\mathring K_{j-1})=0$, which we now explain.

%
%

\subsection*{Case 2: Assume that the Euler characteristic $\chi(K_j\setminus\mathring K_{j-1})$ equals $0$}
In this case $K_{j-1}$ is a strong deformation retract of $K_j$. 
We shall proceed in three steps, each one consisting of a different deformation procedure.

%
%

\medskip
\noindent{\em Step 1. Catching the points in $E_j\setminus E_{j-1}$.} The aim of this step is to approximate $f_{j-1}$ on $M_{j-1}$ by a holomorphic embedding which is defined on a compact domain in $M$ containing $M_{j-1}\cup E_j$ in its relative interior and which matches with $f$ everywhere on $E_j$ (see property {\rm (vi)} below). Our main tool in this step will be, again, the classical Mergelyan theorem with interpolation.

Assume that $E_j\setminus E_{j-1}\neq\varnothing$; otherwise we skip this step and proceed directly with Step 2 below. Since $E_j$ is finite then so is $E_j\setminus E_{j-1}$. For each point $p\in E_j\setminus E_{j-1}\subset M\setminus M_{j-1}$ (see {\rm (2$_{j-1}$)}), choose a smooth embedded Jordan arc $\gamma_p\subset M\setminus \mathring M_{j-1}$ having an endpoint in $bM_{j-1}\setminus E$ and meeting $bM_{j-1}$ transversely there, having $p$ as the other endpoint, and being otherwise disjoint from $M_{j-1}\cup E$. Choose the arcs $\gamma_p$, $p\in E_j\setminus E_{j-1}$, to be pairwise disjoint. Obviously, for each $p\in E_j\setminus E_{j-1}$ the arc $\gamma_p$ lies in the connected component of $M\setminus \mathring M_{j-1}$ containing the point $p$. Set 
\[	
	\Gamma:=\bigcup_{p\in E_j\setminus E_{j-1}}\gamma_p \subset M\setminus \mathring M_{j-1}
\]
and observe that
\begin{equation}\label{eq:Mj-1G}
	(M_{j-1}\cup\Gamma)\cap E=E_j
\end{equation}
(take into account {\rm (2$_{j-1}$)}).
Extend the holomorphic embedding $f_{j-1}$, with the same name, to a smooth embedding $f_{j-1}\colon M_{j-1}\cup\Gamma\to\Omega$ such that:
\begin{itemize}
\item[\rm (i)] $f_{j-1}(\Gamma)\subset \mathring L_j\setminus L_{j-1}$.
\smallskip
\item[\rm (ii)] $f_{j-1}(p)=f(p)$ for all $p\in E_j\setminus E_{j-1}$.
\end{itemize}
Existence of such extension is clear from {\rm (5$_{j-1}$)}, {\rm (6$_{j-1}$)}, {\rm (7$_{j-1}$)}, and the fact that $\mathring L_j\setminus L_{j-1}$ is a connected open set which contains $f(E_j\setminus E_{j-1})=\Lambda_j\setminus\Lambda_{j-1}$ (see \eqref{eq:cupLj} and \eqref{eq:Ej}). In view of {\rm (6$_{j-1}$)}, {\rm (7$_{j-1}$)}, and {\rm (i)} we have that
\begin{equation}\label{eq:fj-1G}
	f_{j-1}(M_{j-1}\cup\Gamma)\subset\mathring L_j
	\quad\text{and}\quad
	f_{j-1}(\Gamma\cup bM_{j-1})\cap L_{j-1}=\varnothing.
\end{equation}
Since $M_{j-1}\cup\Gamma$ is an $\Ocal(M)$-convex compact set, given a small number $\delta>0$ which will be specified later, Mergelyan's theorem with interpolation applied to $f_{j-1}\colon M_{j-1}\cup\Gamma\to \c^2$ furnishes a (connected) smoothly bounded $\Ocal(M)$-convex compact domain $R\subset M$ and a holomorphic embedding $\phi\colon R\to\c^2$ meeting the following requirements:
\begin{itemize}
\item[\rm (iii)] $R\cap E=E_j\subset M_{j-1}\cup\Gamma\subset\mathring R$ and $M_{j-1}$ is a strong deformation retract of $R$. (See \eqref{eq:Mj-1G}.)
\smallskip
\item[\rm (iv)] $|\phi(p)-f_{j-1}(p)|<\delta$ for all $p\in M_{j-1}\cup\Gamma$.
\smallskip
\item[\rm (v)] $\phi(R\setminus \mathring M_{j-1})\cap L_{j-1}=\varnothing$. (Take into account the second part of \eqref{eq:fj-1G}.)
\smallskip
\item[\rm (vi)] $\phi(p)=f(p)$ for all $p\in E_j$. (See {\rm (4$_{j-1}$)} and {\rm (ii)}.)
\end{itemize}
Moreover, in view of {\rm (iv)} and the first part of  \eqref{eq:fj-1G} and assuming that $\delta>0$ is chosen sufficiently small, we may and shall take $R$ close enough to $M_{j-1}\cup\Gamma$ so that
\begin{equation}\label{eq:phi(R)}
	\phi(R)\subset\mathring L_j.
\end{equation}

On the other hand, since $\phi(R\setminus\mathring M_{j-1})$ is compact and $L_{j-1}$ is compact and polynomially convex, in view of {\rm (v)} there is a polynomially convex compact set $\Lscr\subset\c^2$ such that
\begin{equation}\label{eq:Lscr}
	L_{j-1}\Subset \Lscr \Subset L_j
	\quad\text{and}\quad
	\phi(R\setminus\mathring M_{j-1})\cap \Lscr=\varnothing.
\end{equation}
Indeed, since $L_{j-1}$ is a polynomially convex compact set we have that for any neighborhood $U$ of $L_{j-1}$ there is another neighborhood $V=V(U)$ of $L_{j-1}$ such that if $Y$ is a compact subset of $V$, then the polynomial convex hull $\wh Y$ of $Y$ is contained in $U$. 
Thus, if the neighborhood $U$ is chosen to lie in $\mathring L_j\setminus \phi(R\setminus\mathring M_{j-1})$, which is an open neighborhood of $L_{j-1}$ by \eqref{eq:cupLj} and {\rm (v)}, and the compact set $Y\subset V(U)$ is chosen with $L_{j-1}\subset\mathring Y$, then the polynomially convex compact set $\Lscr:=\wh Y$ meets the requirements in \eqref{eq:Lscr}.

This concludes the first deformation stage in the proof of the inductive step.

%
%

\medskip
\noindent{\em Step 2: Pushing the boundary out of $L_j$.} In this second step we shall deform $\phi(R)$ near its boundary in order to obtain an embedded complex curve whose boundary is disjoint from $L_j$ (see condition {\rm (x)} below). In order to do that we shall use the following approximation result by proper holomorphic embeddings into $\c^2$.

\begin{lemma}\label{lem:ML}
Let $L\subset\c^2$ be a polynomially convex compact set, let $R=\mathring R\cup bR$ be a compact bordered Riemann surface, let $K\subset \mathring R$ be a smoothly bounded compact domain, and assume that there is an embedding $\phi\colon R\to\c^2$ of class $\Ascr^1(R)$ such that
\begin{equation}\label{eq:RL}
	\phi(R\setminus\mathring K)\cap L=\varnothing.
\end{equation}
Then, for any $\epsilon>0$ there is a proper holomorphic embedding $\wt\phi \colon \mathring R\hra \c^2$ satisfying the following properties:
\begin{itemize}
\item[\rm (I)] $|\wt\phi(p)-\phi(p)|<\epsilon$ for all $p\in K$.
\smallskip
\item[\rm (II)] $\wt\phi(\mathring R\setminus \mathring K)\cap L=\varnothing$.
\end{itemize}
\end{lemma}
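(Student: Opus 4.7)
\medskip

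\noindent\textbf{Plan of proof of Lemma \ref{lem:ML}.}
The strategy is a two-stage construction: first a Mergelyan-type approximation to replace $\phi$ by a holomorphic embedding on a slightly larger neighborhood, and then a recursive Andersén--Lempert scheme to push the boundary of the image out of larger and larger polynomially convex compact sets exhausting $\c^2$, while preserving the disjointness from $L$.

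\emph{Initial approximation.} Since $R$ is a compact bordered Riemann surface, I embed it in a slightly larger open Riemann surface $R'$ so that $R$ is a smoothly bounded $\Ocal(R')$--convex compact domain. Applying the Mergelyan theorem for holomorphic embeddings into $\c^2$ (in the form used in Section~\ref{sec:MT}), I obtain, for arbitrarily small $\delta>0$, a smoothly bounded compact domain $R_0\subset R'$ with $R\Subset \mathring R_0$ and a holomorphic embedding $\phi_0\colon R_0\to\c^2$ with $|\phi_0-\phi|<\delta$ on $R$. By compactness of $\phi(R\setminus\mathring K)$ and $L$, if $\delta$ is small enough then the hypothesis \eqref{eq:RL} is preserved: $\phi_0(R_0\setminus\mathring K)\cap L=\varnothing$.

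\emph{Exhaustions.} Choose an increasing sequence of smoothly bounded, connected, $\Ocal(R')$--convex compact domains
$K=K_0\Subset K_1\Subset K_2\Subset\cdots\Subset\bigcup_n K_n=\mathring R$, each $K_n$ a strong deformation retract of $K_{n+1}$ (so that $K_n\setminus\mathring K_{n-1}$ is a union of annuli, keeping the topology trivial between steps). In $\c^2$, choose an increasing sequence of polynomially convex compact sets $L=L_0\subset L_1\Subset L_2\Subset\cdots$ with $\bigcup_n L_n=\c^2$.

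\emph{Inductive step.} I shall produce holomorphic embeddings $\phi_n$ on neighborhoods $U_n$ of $K_n$ such that, for a fast decreasing sequence $\epsilon_n\searrow 0$,
\begin{itemize}
\item[(i$_n$)] $|\phi_n-\phi_{n-1}|<\epsilon_n$ on $K_{n-1}$;
\item[(ii$_n$)] $\phi_n(K_n\setminus\mathring K)\cap L=\varnothing$;
\item[(iii$_n$)] $\phi_n(bK_n)\cap L_n=\varnothing$, and in fact $\phi_n(K_n\setminus\mathring K_{n-1})\cap L_{n-1}=\varnothing$.
\end{itemize}
The key observation is that at each stage the set $A_n:=\phi_{n-1}(K_{n-1})\cup L_{n-1}$ is polynomially convex: indeed, $\phi_{n-1}(K_{n-1})$ is polynomially convex because $K_{n-1}$ is $\Ocal(R')$--convex and $\phi_{n-1}$ is an embedding of an embedded complex curve, and by the induction hypothesis $L_{n-1}$ is disjoint from the annular piece $\phi_{n-1}(K_n\setminus\mathring K_{n-1})$, so a standard separation argument (as in the construction of $\Lscr$ in \eqref{eq:Lscr}) makes their union polynomially convex after a small enlargement.

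With $A_n$ polynomially convex, the Andersén--Lempert theorem produces a holomorphic automorphism $\Psi_n\in\mathrm{Aut}(\c^2)$ which is arbitrarily close to the identity on $A_n$ and which sends the compact arcs $\phi_{n-1}(bK_n)$ outside $L_n$ (and can simultaneously be arranged to keep the image $\phi_{n-1}(K_n\setminus\mathring K_{n-1})$ disjoint from $L_{n-1}$, by first homotoping within the complement of $L_{n-1}$ and then approximating by an automorphism). Setting $\phi_n:=\Psi_n\circ\phi_{n-1}$ (on a neighborhood of $K_n$) gives (i$_n$)--(iii$_n$).

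\emph{Passage to the limit.} Taking $\epsilon_n$ to decay fast enough that $\sum\epsilon_n<\epsilon$ and the tails $\sum_{m\ge n+1}\epsilon_m$ are small compared to the distance from $\phi_n(K_n)$ to the complement of a fixed neighborhood, the limit $\wt\phi:=\lim_n\phi_n$ exists as a holomorphic map on $\mathring R=\bigcup_n K_n$, is injective and an immersion by a standard Cauchy-estimate argument (the approximations being close enough on each $K_m$), hence an embedding. Property (I) follows from (i$_n$), and the properness of $\wt\phi$ follows from (iii$_n$): the image of $K_n\setminus\mathring K_{n-1}$ stays outside $L_{n-1}$ up to an arbitrarily small error, so $\wt\phi^{-1}(L_{n-1})\subset K_{n-1}$, whence $\wt\phi$ is proper. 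Property (II) is the special case $n=0$ of this together with (ii$_n$).

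\emph{Main obstacle.} The delicate point is the inductive step: one must guarantee the polynomial convexity of $A_n$ and simultaneously realize the desired boundary displacement by an automorphism that is essentially the identity on $\phi_{n-1}(K_{n-1})\cup L_{n-1}$. The Andersén--Lempert machinery handles this in principle, but ensuring that the pushing isotopy of the boundary curves $\phi_{n-1}(bK_n)$ avoids $L_{n-1}$ throughout (not just at the endpoints) requires that the complement of $L_{n-1}$ be suitably connected at the relevant points; this is where the careful choice of the exhaustion $\{L_n\}$ and of the small enlargement $\Lscr$ from \eqref{eq:Lscr} enters, and it is the technical heart of the argument.
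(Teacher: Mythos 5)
There is a genuine gap at the heart of your inductive step, and your route also differs from the paper's. The paper does not run a recursive Anders\'en--Lempert scheme on an exhaustion of $\c^2$; it proves Lemma~\ref{lem:ML} in a single construction following Wold and Forstneri\v c--Wold: one first attaches smooth arcs $\lambda_1,\ldots,\lambda_m$ inside $\c^2\setminus L$ (possible because $L$ is polynomially convex, so its complement is connected) joining each boundary curve $\phi(C_j)$ to an exposed point $z_j$ with $\pi_1^{-1}(\pi_1(z_j))\cap L=\varnothing$; then applies a rational shear $g(\zeta_1,\zeta_2)=\bigl(\zeta_1,\zeta_2+\sum_j \alpha_j/(\zeta_1-\pi_1(z_j))\bigr)$ that sends the boundary curves to unbounded curves whose $\pi_2$-projections leave no relatively compact components; and finally invokes Wold's Fatou--Bieberbach theorem to obtain a biholomorphism $\varphi\colon D\to\c^2$ close to the identity on $L$ and on a polynomially convex compact $L_0$ containing the image of $K$, with $\wt\phi=\varphi\circ g\circ\psi$. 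The disjointness from $L$ in condition~(II) is threaded through each of these three moves, using (a3), (b4), (c4), and~(c5).

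In your proposal, the inductive step is not justified. First, $\phi_{n-1}(K_{n-1})$ need not be polynomially convex: it is an $\Ocal$-convex compact piece of a bounded embedded curve with boundary, but since that curve is not properly embedded in $\c^2$, such a piece can have polynomial hull spilling off the curve, and your claimed separation argument does not address this. Second, and more seriously, you invoke ``the Anders\'en--Lempert machinery'' to produce an automorphism $\Psi_n$ nearly fixing $\phi_{n-1}(K_{n-1})\cup L_{n-1}$ while pushing $\phi_{n-1}(bK_n)$ outside $L_n$. Anders\'en--Lempert approximates an isotopy $\Phi_t$ of injective holomorphic maps defined near a polynomially convex compact whose time-$t$ images remain Runge; it is not a device for moving boundary circles ``out of the way.'' Those circles bound analytic annuli in $\phi_{n-1}(R_0)$ whose polynomial hulls can interlock with $L_{n-1}$, so no admissible isotopy exists and the required automorphism may fail to exist at all. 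This is precisely the obstruction that the exposed-point/shear preprocessing is designed to remove: after the shear the boundary curves acquire the nice projection property that makes the relevant unions polynomially convex, at which point the automorphism step hidden in Wold's Fatou--Bieberbach construction becomes available. Your ``main obstacle'' paragraph correctly flags this spot, but it does not supply the missing argument, and without that preprocessing the recursion does not get off the ground.
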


Lemma \ref{lem:ML} is an extension of Lemma 3.2 in Alarc\'on and L\'opez \cite{AlarconLopez2013JGA}, where the polynomially convex compact set is assumed to be a round ball. We shall prove the lemma by adapting the methods developed by Wold in \cite{Wold2006MZ} and by Forstneri\v c and Wold in \cite{ForstnericWold2009JMPA}, for embedding bordered Riemann surfaces in $\c^2$, in order to guarantee condition {\rm (II)}. 

In order to keep the story thread in the proof Lemma \ref{lem:Induction}, we defer the proof of Lemma \ref{lem:ML} to later on. Hence, assume that Lemma \ref{lem:ML} holds true and let us continue the proof of the inductive step.

Let $R_0\subset M$ be a smoothly bounded compact domain such that
\begin{equation}\label{eq:R0}
	M_{j-1}\cup\Gamma\Subset R_0\Subset R. 
\end{equation}
Note that $\phi(R\setminus\mathring R_0)\cap \Lscr=\varnothing$ by \eqref{eq:Lscr}, and hence Lemma \ref{lem:ML} may be applied to the polynomially convex compact set $\Lscr$, the compact bordered Riemann surface $R$, the domain $R_0$, and the embedding $\phi$. This furnishes, given a small number $\wt\delta>0$ which will be specified later, a proper holomorphic embedding $\wt\phi\colon\mathring R\hra\c^2$ such that:
\begin{itemize}
\item[\rm (vii)] $|\wt\phi(p)-\phi(p)|<\wt\delta$ for all $p\in R_0$.
\smallskip
\item[\rm (viii)] $\wt\phi(\mathring R\setminus \mathring R_0)\cap \Lscr=\varnothing$. 
\end{itemize}

Assuming that $\wt\delta>0$ is chosen sufficiently small, \eqref{eq:phi(R)} and {\rm (vii)} guarantee that 
\begin{equation}\label{eq:wtphiMj-1}
	\wt\phi(M_{j-1})\subset\wt\phi(R_0)\subset\mathring L_j.
\end{equation}
 On the other hand, since $R\subset M$ is a compact domain and $M_{j-1}$ is a strong deformation retract of $R$ (see {\rm (iii)}), $R\setminus \mathring M_{j-1}$ consists of finitely many, pairwise disjoint, smoothly bounded, compact annuli in $M$. By \eqref{eq:wtphiMj-1}, $\wt\phi$ maps the boundary components of these annuli which lie in $bM_{j-1}$ into $\mathring L_j$. Hence, since $\wt\phi\colon \mathring R\hra\c^2$ is a proper map, given a number
\begin{equation}\label{eq:tau}
	0<\tau<\frac12\dist(L_j,\c^2\setminus\mathring L_{j+1})
\end{equation} 
there is a (connected) smoothly bounded $\Ocal(M)$-convex compact domain $M_j$ with the following properties:
\begin{itemize}
\item[\rm (ix)] $R_0\Subset M_j\Subset R$ and $M_{j-1}$ is a strong deformation retract of $M_j$.
\smallskip
\item[\rm (x)] $\dist(\wt\phi(M_j), \c^2\setminus \mathring L_{j+1})>\tau$ and $\dist(\wt\phi(bM_j), L_j)>\tau$. In particular, we have $\wt\phi(M_j)\subset \mathring L_{j+1}$ and $\wt\phi(bM_j)\cap L_j=\varnothing$.
\smallskip
\item[\rm (xi)] $\wt\phi(M_j)\cap (\Lambda_{j+1}\setminus \Lambda_j)=\varnothing$.
\end{itemize}
Indeed, since the set $\Lambda_{j+1}\setminus\Lambda_j=f(E_{j+1}\setminus E_j)\subset \mathring L_{j+1}\setminus L_j$ is finite (see \eqref{eq:Ej}), condition {\rm (xi)} may be achieved by a slight deformation of $\wt\phi$ (for instance, by composing it with a small translation in $\c^2$). Alternatively, we may simply choose the domain $M_j\subset M$ such that $\wt\phi(M_j)$ is contained in a small neighborhood of $L_j$ in $\mathring L_{j+1}$ being disjoint from $\Lambda_{j+1}\setminus\Lambda_j$. (For the latter approach we have to choose $\tau>0$ in \eqref{eq:tau} sufficiently small to make possible the second inequality in {\rm (x)}, to be precise, we need $\tau<\dist(L_j,\Lambda_{j+1}\setminus\Lambda_j)$.)

This concludes the second deformation stage.

%
%

\medskip
\noindent{\em Step 3: Matching up with $f$ on $E_j$.} We shall now slightly perturb $\wt\phi$ to make it agree with $f$ everywhere on $E_j$.
In order to do that we shall use the following existence result for holomorphic automorphisms of $\c^2$.
\begin{lemma}\label{lem:hitting}
If $r>0$ is a number and $\Lambda\subset r\b=\{z\in\c^2\colon |z|< r\}$ is a finite set, then there are numbers $\eta>0$ and $\mu>0$ such that the following condition holds true. Given a number  $0<\beta<\eta$ and a map $\varphi\colon\Lambda\to\c^2$ such that
\begin{equation}\label{eq:lemvarphi}
	|\varphi(z)-z|<\beta\quad \text{for all $z\in\Lambda$},
\end{equation}
there is a holomorphic automorphism $\Psi\colon\c^2\to\c^2$ satisfying the following conditions:
\begin{itemize}
\item[\rm (I)] $\Psi(\varphi(z))=z$ for all $z\in\Lambda$.
\smallskip
\item[\rm (II)] $|\Psi(z)-z|<\mu\beta$ for all $z\in r\overline\b$.
\end{itemize}
\end{lemma}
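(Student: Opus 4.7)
The plan is to construct $\Psi$ explicitly as a composition of two shears of $\c^2$, controlling everything through one-variable Lagrange interpolation. Write $\Lambda=\{z_1,\ldots,z_n\}$. Since $\Lambda$ is finite, a generic unitary change of coordinates on $\c^2$ makes both coordinate projections of $\Lambda$ injective; such a change preserves the ball $r\b$ and conjugates $\Psi$ without affecting the bound to be proved, so I may assume from the start that the first coordinates $a_1,\ldots,a_n$ of the $z_k$ are pairwise distinct and, likewise, the second coordinates $b_1,\ldots,b_n$ are pairwise distinct. Write $\varphi(z_k)=(a_k',b_k')$; the hypothesis gives $|a_k-a_k'|,\,|b_k-b_k'|\le|\varphi(z_k)-z_k|<\beta$.

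Choose $\eta>0$ small enough that whenever $\beta<\eta$ the perturbed nodes $a_1',\ldots,a_n'$ stay pairwise distinct with a uniform lower bound on their separation (say half the minimum separation of the $a_k$). Let $h_1\in\c[\zeta]$ be the Lagrange interpolating polynomial of degree at most $n-1$ with $h_1(a_k')=b_k-b_k'$ for every $k$, and define
\[
\sigma_1(z_1,z_2)=\bigl(z_1,\,z_2+h_1(z_1)\bigr).
\]
This is a holomorphic automorphism of $\c^2$ and $\sigma_1(\varphi(z_k))=(a_k',b_k)$. The standard Lagrange bound at uniformly separated nodes with data of size $<\beta$ gives $\|h_1\|_{|\zeta|\le r}\le C_1\beta$, with $C_1$ depending only on $\Lambda$ and $r$; in particular $|\sigma_1(z)-z|\le C_1\beta$ on $r\overline{\b}$. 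The images $\sigma_1(\varphi(z_k))=(a_k',b_k)$ now have \emph{fixed}, pairwise distinct second coordinates $b_k$, so I can similarly take $h_2\in\c[\zeta]$ of degree at most $n-1$ with $h_2(b_k)=a_k-a_k'$ and form
\[
\sigma_2(z_1,z_2)=\bigl(z_1+h_2(z_2),\,z_2\bigr),
\]
obtaining $\|h_2\|_{|\zeta|\le r+C_1\eta}\le C_2\beta$ for a constant $C_2$ depending only on $\Lambda$ and $r$. Setting $\Psi:=\sigma_2\circ\sigma_1$, by construction $\Psi(\varphi(z_k))=(a_k,b_k)=z_k$, which is (I), and the triangle inequality along $z\mapsto\sigma_1(z)\mapsto\sigma_2(\sigma_1(z))$ gives $|\Psi(z)-z|\le (C_1+C_2)\beta$ on $r\overline{\b}$, which is (II) with $\mu:=C_1+C_2$.

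The only mild technical point is that the nodes for the first interpolation depend on the unknown map $\varphi$, so $\eta$ must be chosen small enough to keep those nodes uniformly separated, and hence the interpolation constants uniform, for \emph{all} admissible $\varphi$. Once that is arranged the Lagrange kernel depends continuously on the nodes and a direct compactness argument yields constants $C_1,C_2$ that depend only on $(\Lambda,r)$. No Andersén--Lempert density property or deeper automorphism machinery is required: in $\c^2$, two shears are enough to realize finite interpolation with the required quantitative control.
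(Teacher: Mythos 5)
Your proof is correct, and it follows a genuinely different route from the paper. The paper quotes Globevnik's Lemma~7.2 from \cite{Globevnik2016JMAA} (the case $r=1$) as a black box and obtains the general statement by the scaling $z\mapsto z/r$, conjugating the automorphism $\Phi$ furnished by that lemma by the dilation to get $\Psi(z)=r\,\Phi(z/r)$. You instead give a self-contained construction: after a generic unitary rotation (which preserves $r\b$ and commutes with the estimate, since unitaries are isometries of $\c^2$) you may assume both coordinate projections of $\Lambda$ are injective, and then two shears built from Lagrange interpolating polynomials do the job. The first shear corrects the second coordinates using the perturbed first coordinates $a_k'$ as nodes; since $|a_k'-a_k|<\beta<\eta$ the nodes stay uniformly separated once $\eta$ is smaller than a quarter of the minimal gap among the $a_k$, which keeps the Lagrange basis bounded by a constant depending only on $\Lambda$ and $r$. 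The second shear corrects the first coordinates using the unperturbed second coordinates $b_k$ as nodes, and the estimate only requires bounding $h_2$ on the slightly larger disk of radius $r+C_1\eta$, again with constants depending only on $(\Lambda,r)$. The identities $\sigma_1(\varphi(z_k))=(a_k',b_k)$ and $\sigma_2(a_k',b_k)=(a_k,b_k)$ give (I), and the triangle inequality gives (II) with $\mu=C_1+C_2$. What each approach buys: the paper's route is shorter and defers the real work to a cited reference; yours is elementary and transparent, makes the dependence of $\eta$ and $\mu$ on $(\Lambda,r)$ completely explicit, and exposes the underlying mechanism (finite interpolation by shears), which is presumably close to how \cite[Lemma~7.2]{Globevnik2016JMAA} is itself proved. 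The small technical points you flag --- genericity of the unitary, uniform separation of the perturbed nodes, uniformity of the interpolation constants --- are all handled correctly.
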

Lemma \ref{lem:hitting} for $r=1$ is due to Globevnik (see \cite[Lemma 7.2]{Globevnik2016JMAA}); we shall prove the  general case as an application of this particular one.
(We point out that, alternatively, the proof given in \cite[Lemma 7.2]{Globevnik2016JMAA} may be easily adapted to work for arbitrary radious.)
Note that the number $\eta>0$ provided by the above lemma must be small enough so that every map $\varphi\colon\Lambda\to\c^2$ satisfying the inequality \eqref{eq:lemvarphi} for any $0<\beta<\eta$ is injective; otherwise condition {\rm (I)} would lead to a contradiction. 

We postpone the proof of Lemma \ref{lem:hitting} to later on. Hence, assume that Lemma \ref{lem:hitting} holds true and let us continue the discussion of the inductive step in the proof of Lemma \ref{lem:Induction}.

Since $L_{j+1}\subset\c^2$ is compact, there is a number $r>0$ such that 
\begin{equation}\label{eq:Lj+1}
	L_{j+1}\subset r\b=\{z\in\c^2\colon |z|<r\}.
\end{equation} 
Recall that the set $\Lambda_{j+1}=f(E_{j+1})\subset \mathring L_{j+1}$ given in \eqref{eq:Ej} is finite and let $\eta>0$ and $\mu>0$ be the numbers provided by Lemma \ref{lem:hitting} applied to $r$ and $\Lambda_{j+1}$ (take into account \eqref{eq:Lj+1}).
It is clear that
\begin{equation}\label{eq:etamu}
	\text{neither $\eta$ nor $\mu$ depend on the choice of the constants $\delta$ and $\wt\delta$}.\end{equation}

Also recall that $E_j\subset M_j$  (see {\rm (iii)}) and that $f|_{E_{j+1}}\colon E_{j+1}\to\Lambda_{j+1}$ is a bijection. Consider the map $\varphi\colon\Lambda_{j+1}\to\c^2$ given by
\[
	\Lambda_{j+1}\ni f(p)\longmapsto \varphi(f(p))=\left\{
	\begin{array}{ll}
	\wt\phi(p) & \text{if }p\in E_j, \bigskip
	\\
	f(p) & \text{if }p\in E_{j+1}\setminus E_j.
	\end{array}\right.
\]
In view of property {\rm (xi)} and the facts that  $\wt\phi\colon M_j\to\c^2$ is injective and that $f|_{E_j}\colon E_j\to f(E_j)=\Lambda_j\subset\Lambda_{j+1}$ is a bijection, we have that $\varphi$ is well defined and injective. Notice that $\varphi|_{\Lambda_{j+1}\setminus\Lambda_j}$ is the inclusion map. Moreover, conditions {\rm (vi)} and {\rm (vii)} ensure that 
\[
	|\varphi(z)-z|<\wt\delta\quad \text{for all $z\in\Lambda_{j+1}$};
\]
note that $E_j\subset M_j$ by {\rm (iii)}, \eqref{eq:R0}, and {\rm (ix)}.
Thus, assuming as we may that $\wt\delta>0$ has been chosen to be smaller than $\eta$ (see \eqref{eq:etamu}), Lemma \ref{lem:hitting} furnishes a holomorphic automorphism $\Psi\colon\c^2\to\c^2$ enjoying the following conditions:
\begin{itemize}
\item[\rm (xii)] $\Psi(\varphi(z))=z$ for all $z\in\Lambda_{j+1}$.
\smallskip
\item[\rm (xiii)] $|\Psi(z)-z|<\mu\wt\delta$ for all $z\in r\overline\b$.
\end{itemize}

This finishes the third (and final) deformation procedure in the proof of the inductive step.

We now prove the following.

\begin{claim}\label{cl:1-8}
If the numbers $\delta>0$ and $\wt\delta>0$ are chosen sufficiently small, then the smoothly bounded $\Ocal(M)$-convex compact domain $M_j$ and the holomorphic embedding
\[
	f_j:=\Psi\circ\wt\phi\colon M_j\to\c^2
\]
meet requirements {\rm (1$_j$)}--{\rm (8$_j$)} in the statement of the lemma. 
\end{claim}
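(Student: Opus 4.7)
My plan is to check (1$_j$)--(8$_j$) by chaining three approximation estimates controlled by $\delta$ and $\wt\delta$: $|\phi-f_{j-1}|<\delta$ on $M_{j-1}\cup\Gamma$ by (iv), $|\wt\phi-\phi|<\wt\delta$ on $R_0$ by (vii), and $|\Psi(z)-z|<\mu\wt\delta$ on $r\overline\b$ by (xiii). Crucially, $\mu$ (from Lemma~\ref{lem:hitting}) and $\tau$ are fixed independently of $\delta,\wt\delta$ (see \eqref{eq:etamu}), so shrinking these parameters drives $|f_j-f_{j-1}|$ arbitrarily small on $M_{j-1}$. The topological conditions come first: in Case~2, $\chi(K_j\setminus\mathring K_{j-1})=0$ forces $K_{j-1}$ to be a strong deformation retract of $K_j$, and (ix) does the same for $M_{j-1}\subset M_j$; thus both $(\imath_{K_j})_*(H_1(K_j;\z))$ and $(\imath_{M_j})_*(H_1(M_j;\z))$ equal their $(j-1)$-th counterparts, which coincide by (1$_{j-1}$), proving (1$_j$). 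Condition (2$_j$) follows since $E_j\subset M_{j-1}\cup\Gamma\Subset R_0\Subset M_j\subset R$ by (iii), \eqref{eq:R0}, (ix), together with $R\cap E=E_j$ from (iii).

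\textbf{Quantitative conditions.} Define the positive constants
\[
\rho:=\min_{1\le i\le j-1}\dist\!\bigl(f_{j-1}(M_i\setminus\mathring M_{i-1}),\,L_{i-1}\bigr),\qquad \sigma:=\dist(L_{j-1},\,\c^2\setminus\Lscr),
\]
\[
\rho':=\dist\!\bigl(\phi(R_0\setminus\mathring M_{j-1}),\,L_{j-1}\bigr),
\]
which are positive by (8$_{j-1}$), \eqref{eq:Lscr}, and (v) respectively. Choose $\delta,\wt\delta>0$ small enough that $\delta+\wt\delta+\mu\wt\delta<\min\{\epsilon_{j-1},\rho,\rho',\sigma,\tau\}$. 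Since $\wt\phi(M_j)\subset\mathring L_{j+1}\subset r\b$ by (x) and \eqref{eq:Lj+1}, the three approximation estimates combine by the triangle inequality to give $|f_j-f_{j-1}|<\delta+\wt\delta+\mu\wt\delta<\epsilon_{j-1}$ on $M_{j-1}$, yielding (3$_j$). Property (x) together with $\mu\wt\delta<\tau$ gives (6$_j$) and (7$_j$) at once. For (8$_j$) with $i=j$ split $M_j\setminus\mathring M_{j-1}=(M_j\setminus\mathring R_0)\cup(R_0\setminus\mathring M_{j-1})$: on the first piece, (viii) places $\wt\phi$ outside $\Lscr$, hence at distance $\ge\sigma$ from $L_{j-1}$, and the shift $\mu\wt\delta<\sigma$ preserves disjointness; on the second, (v) places $\phi$ at distance $\rho'$ from $L_{j-1}$, and $|f_j-\phi|<\wt\delta+\mu\wt\delta<\rho'$. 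For $i\le j-1$ the bound $|f_j-f_{j-1}|<\rho$ on $M_{j-1}\supset M_i\setminus\mathring M_{i-1}$ combined with (8$_{j-1}$) is enough.

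\textbf{Interpolation.} For (4$_j$), the definition of $\varphi$ gives $\varphi(f(p))=\wt\phi(p)$ for $p\in E_j$, so (xii) reads $f_j(p)=\Psi(\wt\phi(p))=\Psi(\varphi(f(p)))=f(p)$. For (5$_j$), split $E\setminus E_j=(E_{j+1}\setminus E_j)\sqcup(E\setminus E_{j+1})$. Points of $f(E\setminus E_{j+1})$ lie in $\Omega\setminus L_{j+1}$ by \eqref{eq:Ej}--\eqref{eq:bLj}, while $f_j(M_j)\subset\mathring L_{j+1}$ by (6$_j$), so these are disjoint. For $p\in E_{j+1}\setminus E_j$, $\varphi(f(p))=f(p)$ and (xii) makes $f(p)$ a fixed point of $\Psi$; if $f_j(p')=f(p)$ for some $p'\in M_j$, injectivity of $\Psi$ would force $\wt\phi(p')=f(p)\in\Lambda_{j+1}\setminus\Lambda_j$, contradicting (xi). \emph{This contradiction is the main obstacle of the whole verification}: it is the reason property (xi) was engineered in Step~2 and why Step~3 needs the interpolating automorphism $\Psi$ of Lemma~\ref{lem:hitting} rather than a plain Mergelyan perturbation; every other condition is routine once the positive distances $\rho,\rho',\sigma,\tau$ and the independence of $\mu$ from $\delta,\wt\delta$ are in hand.
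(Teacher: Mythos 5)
Your proof reproduces the paper's argument essentially verbatim: you set up the same chain of estimates ($|\phi-f_{j-1}|<\delta$ on $M_{j-1}\cup\Gamma$, $|\wt\phi-\phi|<\wt\delta$ on $R_0$, $|\Psi-\mathrm{id}|<\mu\wt\delta$ on $r\overline\b$), split (8$_j$) over the same three regions $M_i\setminus\mathring M_{i-1}$ ($i<j$), $R_0\setminus\mathring M_{j-1}$, and $M_j\setminus\mathring R_0$, and obtain (4$_j$) and (5$_j$) from (xii) and (xi) exactly as in the paper, so the approach is the same. One small imprecision worth noting: $\rho'$ and $\sigma$ depend on $\delta$ (through $\phi$, $R_0$, $\Lscr$, which are only determined after $\delta$ is fixed in Step 1), so the combined inequality $\delta+\wt\delta+\mu\wt\delta<\min\{\ldots,\rho',\sigma,\ldots\}$ should be understood as a two-stage choice --- first pick $\delta<\min\{\epsilon_{j-1},\rho\}$, then pick $\wt\delta$ once $\rho',\sigma$ are known --- which is how the paper in fact arranges things.
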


Indeed, conditions {\rm (1$_j$)} and {\rm (2$_j$)} are ensured by {\rm (1$_{j-1}$)}, {\rm (iii)}, \eqref{eq:R0}, {\rm (ix)}, and the initial assumption that the Euler characteristic $\chi(K_j\setminus\mathring K_{j-1})$ equals $0$.  Notice now that {\rm (iv)}, {\rm (vii)}, {\rm (x)}, and \eqref{eq:Lj+1} give
\begin{equation}\label{eq:fj-fj-1}
	\left\{
	\begin{array}{ll}
	|f_j(p)-\wt\phi(p)|<\mu\wt\delta & \text{for all }p\in M_j,\bigskip
	\\
	|f_j(p)-\phi(p)|<(1+\mu)\wt\delta & \text{for all }p\in R_0,\bigskip
	\\
	|f_j(p)-f_{j-1}(p)|<\delta+(1+\mu)\wt\delta & \text{for all }p\in M_{j-1}\cup\Gamma.
	\end{array}\right.
\end{equation}
Thus, property {\rm (3$_j$)} is implied by \eqref{eq:fj-fj-1} provided that $\delta>0$ and $\wt\delta>0$ are chosen so that the inequality $\delta+(1+\mu)\wt\delta<\epsilon_{j-1}$ holds true; recall that the number $\mu>0$ does not depend on the choice of $\wt\delta$ (see \eqref{eq:etamu}). In order to check {\rm (4$_j$)} pick a point $p\in E_j$. We have that $f(p)\in\Lambda_j\subset\Lambda_{j+1}$, hence,
\[
	f_j(p)=\Psi(\wt\phi(p))\stackrel{\text{$p\in E_j$}}{=}\Psi(\varphi(f(p)))\stackrel{{\rm (xii)}}=f(p).
\]
Properties {\rm (6$_j$)} and {\rm (7$_j$)} follow from {\rm (x)} and \eqref{eq:fj-fj-1} provided that $\wt \delta$ is chosen with smaller than $\tau/\mu$, where $\tau>0$ is the number given in \eqref{eq:tau}; take into account that neither $\tau$ nor $\mu$ depend on the choice of $\wt \delta$. Now, {\rm (6$_j$)} and \eqref{eq:Ej} ensure that 
\begin{equation}\label{eq:fjMj}
	f_j(M_j)\cap f(E\setminus E_{j+1})=\varnothing.
\end{equation}
On the other hand, given a point $p\in E_{j+1}\setminus E_j$ we have that $\varphi(f(p))=f(p)$, hence,
\[
	\Psi(f(p))=\Psi(\varphi(f(p)))\stackrel{{\rm (xii)}}{=} f(p).
\]
Thus, since $\Psi\colon\c^2\to\c^2$ is a bijection and $f(p)\notin \wt\phi(M_j)$ by {\rm (xi)}, we infer that 
\[
	f(p)=\Psi(f(p))\notin\Psi(\wt\phi(M_j))=f_j(M_j);
\]
together with \eqref{eq:fjMj} we obtain {\rm (5$_j$)}. Finally, {\rm (8$_{j-1}$)} and \eqref{eq:fj-fj-1} ensure that
\begin{equation}\label{eq:Mi-Mi-1}
	f_j(M_i\setminus\mathring M_{i-1})\cap L_{i-1}=\varnothing\quad \text{for all }i=1,\ldots,j-1,
\end{equation}
provided that the numbers $\delta>0$ and $\wt\delta>0$ are chosen to satisfy 
\[
	\delta+(1+\mu)\wt\delta<
	\min\big\{\dist\big(f_{j-1}(M_i\setminus \mathring M_{i-1}), L_{i-1} \big)\colon i=1,\ldots,j-1\big\}.
\]
Note that the number in the right-hand side of the above inequality does not depend on the choices of $\delta$ and $\wt\delta$ and, in view of {\rm (8$_{j-1}$)}, is positive. On the other hand, \eqref{eq:Lscr} and \eqref{eq:fj-fj-1} give that 
\begin{equation}\label{eq:Mi-Mi-2}
	f_j(R_0\setminus \mathring M_{j-1})\cap L_{j-1}=\varnothing,
\end{equation}
provided that $(1+\mu)\wt\delta<\dist(L_{j-1},\c^2\setminus\mathring \Lscr)$; again, observe that the number in the right-hand side of this inequality is positive and does not depend on the choice of $\wt\delta$ (see \eqref{eq:Lscr}). Likewise, {\rm (viii)} and \eqref{eq:fj-fj-1} ensure that
\begin{equation}\label{eq:Mi-Mi-3}
	f_j(M_j\setminus \mathring R_0)\cap L_{j-1}=\varnothing,
\end{equation}
whenever that we choose $\wt\delta<\frac1{\mu} \dist(L_{j-1},\c^2\setminus\mathring \Lscr)$. Taking into account {\rm (ix)}, we infer from \eqref{eq:Mi-Mi-2} and \eqref{eq:Mi-Mi-3} that
\[
	f_j(M_j\setminus\mathring M_{j-1})\cap L_{j-1}=\varnothing.
\]
This and \eqref{eq:Mi-Mi-1} show condition {\rm (8$_j$)}. This concludes the proof of Claim \ref{cl:1-8}.

Once we have checked conditions {\rm (1$_j$)}--{\rm (8$_j$)}, to complete the proof of the inductive step it only remains to choose a number $\epsilon_j>0$ satisfying {\rm (9$_j$)} and {\rm (10$_j$)}. Taking into account {\rm (8$_j$)} and the facts that $M_{j-1}\subset\mathring M_j$ and that $f_j\colon M_j\to\Omega$ is a holomorphic embedding (see properties {\rm (A)} and {\rm (B)} in the statement of the lemma), such number exists by the Cauchy estimates, the compactness of $M_j$, and the openness of $\Omega$.

This concludes the proof of the inductive step in case $\chi(K_j\setminus\mathring K_{j-1})=0$, thereby proving the inductive step and completing the proof of Lemma \ref{lem:Induction} under the assumption that Lemma \ref{lem:ML} and Lemma \ref{lem:hitting} hold true.

%
%

\subsubsection*{Completion of the proof}\label{ss:lemmas}

In order to complete the proof of Lemma \ref{lem:Induction} we shall now prove Lemma \ref{lem:ML} and Lemma \ref{lem:hitting}. 
%
%

\begin{proof}[Proof of Lemma \ref{lem:ML}]
Let $L$, $R$, $K$,  $\phi$, and $\epsilon$ be as in the statement of Lemma \ref{lem:ML}. Assume without loss of generality that $R$ is a smoothly bounded compact domain in an open Riemann surface, $\wt R$, and, by Mergelyan's theorem and a shrinking of $\wt R$ around $R$ if necessary, that $\phi$ is a holomorphic embedding $\phi \colon \wt R\to \c^2$. Also, up to enlarging $K$ if necessary, we may and shall assume that $K$ is a strong deformation retract of $R$. 

Let $\pi_i\colon\c^2\to\c$ be the projection $\pi_i(\zeta_1,\zeta_2)=\zeta_i$, $i=1,2$. For each $z\in\c^2$, denote 
\[
	\Lambda_z:=\pi_1^{-1}(\pi_1(z))=\{(\pi_1(z),\zeta)\colon \zeta\in\c\}.
\]

Denote by $C_1,\ldots,C_m$ the connected components of $bR$. Since $L\subset\c^2$ is a polynomially convex compact set, we have that $\c^2\setminus L$ is a connected open set in $\c^2$, and hence path-connected. Thus, \eqref{eq:RL} enables us to choose pairwise disjoint smoothly embedded Jordan arcs $\lambda_1,\ldots,\lambda_m$ in $\c^2\setminus L$ meeting the following requirements for all $j\in\{1,\ldots,m\}$: 
\begin{itemize}
\item[\rm (a1)] $\lambda_j$ has an endpoint $w_j$ in $\phi(C_j)$ and is otherwise disjoint from $\phi(R)$.
\smallskip
\item [\rm (a2)]The other endpoint $z_j$ of $\lambda_j$ is an {\em exposed point} (with respect to the projection $\pi_1$) for the set $\phi(R)\cup(\bigcup_{k=1}^m\lambda_k)$ in the sense of  \cite[Def.\ 4.1]{ForstnericWold2009JMPA}.
\smallskip
\item[\rm (a3)] $\Lambda_{z_j}\cap L=\varnothing$. (Recall that $L$ is compact.)
\end{itemize}

Set $a_j:=\phi^{-1}(w_j)$ and note that $a_j$ is a well-defined point in $C_j$, $j=1,\ldots,m$. Let $V\subset R\setminus K$ be an open neighborhood of $\{a_1,\ldots,a_m\}$ in $R$ and $\epsilon_0>0$ be a number which will be specified later. Reasoning as in \cite[Proof of Theorem 4.2]{ForstnericWold2009JMPA} or \cite[Proof of Lemma 3.2]{AlarconLopez2013JGA}, we obtain, up to slightly deforming $\lambda_j$ near $w_j$ if necessary, a holomorphic embedding $\psi\colon R\to\c^2$ enjoying the following properties:
\begin{itemize}
\item[\rm (b1)] $|\psi(p)-\phi(p)|<\epsilon_0$ for all $p\in R\setminus V$.  
\smallskip
\item[\rm (b2)] $\dist(\psi(p),\lambda_j)<\epsilon_0$ for all $p\in V_j$, where $V_j$ is the component of $V$ containing $a_j$, $j=1,\ldots,m$.
\smallskip
\item[\rm (b3)] $\psi(a_j)=z_j$ is an exposed boundary point of $\psi(R)$, $j=1,\ldots,m$.
\end{itemize}
Moreover, choosing $\epsilon_0>0$ small enough, properties {\rm (b1)}, {\rm (b2)}, and \eqref{eq:RL} ensure that
\begin{itemize}
\item[\rm (b4)] $\psi(R\setminus \mathring K)\cap L=\varnothing$.
\end{itemize}

Now, given a number $\epsilon_1>0$ which will be specified later, arguing as in \cite[Proof of Theorem 5.1]{ForstnericWold2009JMPA} or \cite[Proof of Lemma 3.2]{AlarconLopez2013JGA}, there are numbers $\alpha_1,\ldots,\alpha_m\in\c\setminus\{0\}$ such that the rational shear map $g$ of $\c^2$ defined by
\[
	g(\zeta_1,\zeta_2)=\Big( \zeta_1 \,,\, \zeta_2+\sum_{j=1}^m \frac{\alpha_j}{\zeta_1-\pi_1(z_j)} \Big)
\]
satisifes the following conditions:
\begin{itemize}
\item[\rm (c1)] The projection $\pi_2$ maps the curve $\mu_j:=g(\psi(C_j\setminus\{a_j\}))\subset\c^2$ into an unbounded curve $\delta_j\subset\c$ and $\pi_2|_{\mu_j}\colon \mu_j\to\delta_j$ is a diffeomorphism near infinity, $j=1,\ldots,m$.
\smallskip
\item[\rm (c2)] The complement of the set $r\overline\d\cup (\bigcup_{j=1}^m \delta_j)\subset\c$ in $\c$ has no relatively compact connected components for any large enough $r>0$.
\smallskip
\item[\rm (c3)] $|g(z)-z|<\epsilon_1$ for all $z\in \psi(R\setminus V)$.
\smallskip
\item[\rm (c4)] $g(\psi(W\setminus \mathring K))\cap L=\varnothing$ where $W:=R\setminus \{a_1,\ldots,a_m\}$.
\end{itemize}
In order to ensure condition {\rm (c4)} we use {\rm (a3)} and {\rm (b4)}. Furthermore, setting
\[
	\wt\psi:=g\circ\psi|_W,
\]
it also holds that
\begin{itemize}
\item[\rm (c5)] there is a polynomially convex compact set $L_0\subset\wt\psi(W)$ in $\c^2$ such that $L\cup L_0$ is polynomially convex and $\wt\psi(K)\subset L_0$.
\end{itemize}

By the results in \cite{Wold2006MZ} (see also \cite[Proof of Theorem 5.1]{ForstnericWold2009JMPA}), given $\epsilon_2>0$ to be specified later, there are a Fatou-Bieberbach domain $D\subset\c^2$ and a biholomorphic map $\varphi\colon D\to\c^2$ (i.e., a Fatou-Bieberbach map) such that $\wt \psi(W)\cup L\subset D$, the boundaries $b(\wt \psi(W))\subset bD$, and
\begin{equation}\label{eq:epsilon2}
	|\varphi(z)-z|<\epsilon_2\quad \text{for all $z\in L\cup (D\cap L_0)$}.
\end{equation}

We claim that the map
\[
	\wt\phi:=\varphi\circ{\wt\psi}|_{\mathring R}\colon\mathring R\to\c^2
\]
is a proper holomorphic embedding $\mathring R\hra\c^2$ which satisfies the conclusion of the lemma. Indeed, provided that the positive numbers $\epsilon_0$, $\epsilon_1$, and $\epsilon_2$ are chosen sufficiently small, condition {\rm (I)} is ensured by {\rm (b1)}, {\rm (c3)}, {\rm (c4)}, and \eqref{eq:epsilon2}, while condition {\rm (II)} is implied by {\rm (c4)}, {\rm (c5)}, and \eqref{eq:epsilon2}. 
This concludes the proof.
\end{proof}

%
%

\begin{proof}[Proof of Lemma \ref{lem:hitting}]
Let $r$ and $\Lambda$ be as in the statement of Lemma \ref{lem:hitting}. Notice that if the lemma holds true for some $r>0$, then it also holds true for all numbers $r'\in(0,r)$; hence, we may and shall
assume without loss of generality that $r\ge 1$.

Since $\frac1{r}\Lambda$ is a finite set in $\b$, \cite[Lemma 7.2]{Globevnik2016JMAA} provides numbers $\eta>0$ and $\mu>0$ with the property that given $0<\delta<\eta$ and a map $\phi\colon\Lambda\to\c^2$ such that
\[
	|\phi(z)-z|<\delta\quad \text{for all $z\in\frac1{r}\Lambda$},
\]
there exists a holomorphic automorphism $\Phi\colon\c^2\to\c^2$ meeting
\begin{itemize}
\item[\rm (a)] $\Phi(\phi(z))=z$ for all $z\in\frac1{r}\Lambda$, and
\smallskip
\item[\rm (b)] $|\Phi(z)-z|<\mu\delta$ for all $z\in \overline\b$.
\end{itemize}

We claim that $\eta$ and $\mu$ satisfy the conclusion of Lemma \ref{lem:hitting}. Indeed, let $0<\beta<\eta$ and $\varphi\colon\Lambda\to\c^2$ be as in the statement of the lemma. In particular, the inequality \eqref{eq:lemvarphi} is satisfied. Consider the map $\phi\colon\frac1{r}\Lambda\to\c^2$ given by
\[
	\phi(z)=\frac1{r}\varphi(rz),\quad z\in\frac1{r}\Lambda.
\]
Since $r\ge 1$ and $0<\beta<\eta$, it turns out that for any $z\in \frac1{r}\Lambda$ we have
\[
	|\phi(z)-z| = \big| \frac1{r}\varphi(rz) -z\big| = \frac1{r}|\varphi(rz) -rz|\stackrel{\eqref{eq:lemvarphi}}{<}\frac{\beta}{r}<\eta,
\]
and hence there is a holomorphic automorphism $\Phi\colon\c^2\to\c^2$ enjoying conditions {\rm (a)} and {\rm (b)} above with $\delta$ replaced by $\beta/r$. Consider the holomorphic automorphism $\Psi\colon\c^2\to\c^2$ given by
\[
	\Psi(z)=r\Phi\Big(\frac{z}{r}\Big),\quad z\in\c^2.
\]
Given $z\in\Lambda$ we have
\[
	\Psi(\varphi(z)) = r\Phi\Big(\frac{\varphi(z)}{r}\Big) = r \Phi\Big(\phi\Big(\frac{z}{r}\Big)\Big) 
	\stackrel{{\rm (a)}}{=} r\frac{z}{r}=z,
\]
which proves condition {\rm (I)} in the statement of Lemma \ref{lem:hitting}. On the other hand, for $z\in r\overline\b$ we infer that
\[
	|\Psi(z)-z| = \Big| r\Phi\Big(\frac{z}{r}\Big) -z\Big| = 
	r\Big| \Phi\Big(\frac{z}{r}\Big) - \frac{z}r\Big| \stackrel{{\rm (b)}}{<} r\mu\frac{\beta}{r}=\mu\beta.
\]
This shows condition {\rm (II)}, thereby concluding the proof of Lemma \ref{lem:hitting}.
\end{proof}

This completes the proof of Lemma \ref{lem:Induction}.
The proof of Theorem \ref{th:main} is complete. 
Theorem \ref{th:main-intro} is proved.




\subsection*{Acknowledgements}
A.\ Alarc\'on is partially supported by the MINECO/FEDER grants no.\ MTM2014-52368-P and MTM2017-89677-P, Spain.

Part of the work on this paper was done while the author was visiting the Center for Advanced Study in Oslo (in December 2016) and the School of Mathematical Sciences at the University of Adelaide (in March 2017). He would like to thank these institutions for the hospitality and for providing excellent working conditions. 

The author wishes to thank Franc Forstneri\v c for helpful discussions.




\vspace*{0.3cm}
\noindent Antonio Alarc\'{o}n

\noindent Departamento de Geometr\'{\i}a y Topolog\'{\i}a e Instituto de Matem\'aticas (IEMath-GR), Universidad de Granada, Campus de Fuentenueva s/n, E--18071 Granada, Spain.

\noindent  e-mail: {\tt alarcon@ugr.es}

\end{document}